\documentclass[11pt]{amsart}

\usepackage{amscd,amssymb,amsmath,graphicx,verbatim,xypic}
\usepackage{wasysym}
\usepackage[dvips]{hyperref}
\usepackage[TS1,OT1,T1]{fontenc}
\usepackage{lscape} 
\usepackage{fullpage} 
\usepackage{pslatex} 
\usepackage{tikz}
\usetikzlibrary{shapes,arrows}

\newtheorem{theorem}{Theorem}[section]
\newtheorem{lemma}[theorem]{Lemma}
\newtheorem{corollary}[theorem]{Corollary}
\newtheorem{proposition}[theorem]{Proposition}

\theoremstyle{definition}
\newtheorem{definition}[theorem]{Definition}

\newtheorem{example}[theorem]{Example}

\theoremstyle{remark}
\newtheorem{remark}[theorem]{Remark}


\newcommand{\Dcal}{\ensuremath{\mathcal{D}}}
\newcommand{\Xcal}{\ensuremath{\mathcal{X}}}
\newcommand{\Ycal}{\ensuremath{\mathcal{Y}}}

\newcommand{\Lbb}{\ensuremath{\mathbb{L}}}

\newcommand{\Ucal}{\ensuremath{\mathcal{U}}}
\newcommand{\Kcal}{\ensuremath{\mathcal{K}}}

\newcommand{\Kbb}{\mathbb{K}}

\newcommand{\ra}{\rightarrow}

\numberwithin{equation}{section}

\begin{document}
\title{From ring epimorphisms to universal localisations}
\author{Frederik Marks, Jorge Vit{\'o}ria}
\address{Frederik Marks, Jorge Vit{\'o}ria, Institute of algebra and number theory, University of Stuttgart, Pfaffenwaldring 57, D-70569 Stuttgart, Germany}
\email{marks@mathematik.uni-stuttgart.de, vitoria@mathematik.uni-stuttgart.de}
\thanks{The first named author is supported by DFG-SPP 1489 and the second named author by DFG-SPP 1388.}

\begin{abstract}
For a fixed ring, different classes of ring epimorphisms and localisation maps are compared. In fact, we provide sufficient conditions for a ring epimorphism to be a universal localisation.  Furthermore, we consider recollements induced by some homological ring epimorphisms and investigate whether they yield recollements of derived module categories.

{\bf Keywords:} ring epimorphism; perfect localisation; universal localisation; recollement.
\end{abstract}
\maketitle

\section{Introduction}
It is well-known that Ore localisations yield ring epimorphisms with a flatness condition. 
Different generalisations of Ore localisation, notably localisation with respect to a Gabriel filter and universal localisation, usually lack this flatness property.
Localisations with respect to Gabriel filters generalise Ore localisations from a torsion-theoretic point of view. From a homological perspective, however, these localisation maps are often difficult to deal with. In fact, they are not always ring epimorphisms. Still, this setting is large enough to include all flat ring epimorphisms and these localisations are called perfect (see \cite{St} for details).
Universal localisations, as developed by Cohn (\cite{Cohn}) and Schofield (\cite{Sch}), provide a technique that largely differs from the one above. In particular, they yield ring epimorphisms satisfying some nice homological properties. Universal localisations have shown to be useful in algebraic K-theory  (\cite{NR1}, \cite{NR2}) and the study of tilting modules and derived module categories in representation theory (\cite{LA}, \cite{ALK1}, \cite{CX}, \cite{CX2}, \cite{CX3}).

In \cite{LA}, both universal and perfect localisations were used to construct (large) tilting modules. Furthermore, \cite{LA} compares perfect and universal localisations for semihereditary rings and Pr\"ufer domains. Also, in \cite{KS}, it was proved that universal localisations are in bijection with homological ring epimorphisms for hereditary rings. These results motivate the study of universal localisations from a homological point of view, which we further in this paper, namely through our first theorem.
\\

\textbf{Theorem A} (Theorem \ref{Main}) \textit{Let $f:A\ra B$ be a ring epimorphism such that $B$ is a finitely presented left $A$-module of projective dimension less or equal than one. Then $f$ is homological if and only if it is a universal localisation.}\\

Recent work uses universal localisations to construct interesting examples of recollements of derived module categories (\cite{ALK1}, \cite{CX}, \cite{CX2}, \cite{CX3}). In this setting, we prove the following theorem. \\

\textbf{Theorem B} (Theorem \ref{Main 2}) \textit{Let $f:A\rightarrow B$ be a homological ring epimorphism such that $B$ is a finitely presented left $A$-module of projective dimension less or equal than one. If $Hom_A(coker(f), ker(f))=0$ holds then the derived restriction functor $f_*$ induces a recollement of derived module categories 
\begin{equation}\nonumber
\begin{xymatrix}{\mathcal{D}(B)\ar[r]^{}&\mathcal{D}(A)\ar@<1.5ex>[l]_{}\ar@<-1.5ex>[l]_{}\ar[r]^{}&
\mathcal{D}(End_{\Dcal(A)}(K_f)),\ar@<1.5ex>_{}[l]\ar@<-1.5ex>_{}[l]}
\end{xymatrix}
\end{equation}
where $K_f$ is the cone of $f$ in $\Dcal(A)$. Moreover, if $B$ is a finitely presented projective left $A$-module then there is an isomorphism of rings $End_{\Dcal(A)}(K_f)\cong A/\tau_B(A)$, where $\tau_B(A)$ is the trace of $B$ in $A$. 
}\\

Note that theorem A cannot hold in full generality since universal localisations are not always homological ring epimorphisms (see example \ref{ex1}) just as homological ring epimorphisms are not necessarily universal localisations, notably through Keller's example in \cite{Ke2}. Using different methods, theorem A has also been proved independently by Chen and Xi in \cite{CX3} (corollary 3.7).

Theorem B yields recollements in which both outer terms are derived module categories. These recollements are particularly relevant to recent results obtained in \cite{ALK2}, \cite{ALK3} and \cite{LY}, where a Jordan-H{\"o}lder-type theorem for derived module categories of some rings has been proved. Such a property cannot, however, hold for all rings and a counter example can be constructed using universal localisations (\cite{CX}).

This paper is organised as follows. In section 2 we recall some preliminaries and prove some easy facts. Remark \ref{reflection} and lemma \ref{finite bij}, in particular, give information on how to construct examples in our setting. Section 3 contains theorem A and consequences for the cases of finite, injective and surjective ring epimorphisms. Also, following subsection 2.4, we generalise the comparison between universal localisations, localisations with respect to Gabriel filters and flat ring epimorphisms initiated in \cite{LA}. In section 4 we prove theorem B, while examples illustrating this result are given in section 5. In particular, we use our methods to obtain a large class of algebras which are not derived simple.

\section{Ring epimorphisms and localisations}
Throughout, $A$ will be a ring with unit and $\Kbb$ a field. We will denote the category of left (respectively, right) $A$-modules by $A\mbox{-}Mod$ (respectively, $Mod\mbox{-}A$), its subcategory of finitely generated modules by $A\mbox{-}mod$ (respectively, $mod\mbox{-}A$) and its subcategory of finitely generated projective modules by $A\mbox{-}proj$. The derived category of left $A$-modules will be denoted by $\Dcal(A)$.
\subsection{Ring epimorphisms}
We will be discussing some types of ring epimorphisms. Recall that a ring epimorphism is just an epimorphism in the category of rings with unit. Two ring epimorphisms $f:A\rightarrow B$ and $g:A\rightarrow C$ are said to be equivalent if there is a ring isomorphism $h: B\rightarrow C$ such that $g=hf$. We then say that $B$ and $C$ lie in the same epiclass of $A$.

\begin{proposition}[\cite{St}, Proposition XI.1.2]\label{ring epi}
For a ring homomorphism $f:A\rightarrow B$, the following statements are equivalent.
\begin{enumerate}
\item $f$ is a ring epimorphism;
\item The restriction functor $f_*: B\mbox{-}Mod\rightarrow A\mbox{-}Mod$ (respectively, $f_\#: Mod\mbox{-}B\rightarrow Mod\mbox{-}A$) is fully faithful;
\item $f\otimes_A B=B\otimes_A f: B\rightarrow B\otimes_AB$ is an isomorphism of $B\mbox{-}B$-bimodules;
\item $B\otimes_A coker(f)=0$.
\end{enumerate}
Moreover, the functor $B\otimes_A-$ (respectively, $-\otimes_A B$) is left adjoint to $f_*$ (respectively, $f_\#$).
\end{proposition}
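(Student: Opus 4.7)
The plan is to establish the equivalences as the chain $(1)\Leftrightarrow(3)\Leftrightarrow(4)$, with $(1)\Leftrightarrow(2)$ handled separately through the standard fact that a right adjoint is fully faithful precisely when the counit is a natural isomorphism; the right-module statements follow by applying the left-module case to $f^{op}:A^{op}\to B^{op}$. I would first dispatch the ``moreover'' clause by exhibiting the tensor--hom adjunction: for $M\in A\mbox{-}Mod$ and $N\in B\mbox{-}Mod$ the natural bijection $\Hom_B(B\otimes_A M, N)\cong \Hom_A(M, f_*N)$ is formal, with unit $\eta_M:m\mapsto 1\otimes m$ and counit the multiplication $\varepsilon_N:b\otimes n\mapsto bn$.

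For $(1)\Rightarrow(3)$, I would test the epimorphism condition against the two ring homomorphisms $\iota_1,\iota_2:B\to B\otimes_A B$ defined by $\iota_1(b)=b\otimes 1$ and $\iota_2(b)=1\otimes b$; these are the maps $B\otimes_A f$ and $f\otimes_A B$, respectively, under the canonical isomorphisms $B\otimes_A A\cong B\cong A\otimes_A B$. The relation $f(a)\otimes 1=1\otimes f(a)$ in $B\otimes_A B$ yields $\iota_1 f=\iota_2 f$, hence $\iota_1=\iota_2$ by (1), whence $b_1\otimes b_2 = b_1 b_2\otimes 1$ for all $b_1,b_2\in B$. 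This makes $\iota_1$ surjective; since the multiplication $m:B\otimes_A B\to B$ is visibly a left inverse of $\iota_1$, the map $\iota_1=\iota_2$ is a bimodule isomorphism. Conversely, for $(3)\Rightarrow(1)$, given $g,h:B\to C$ with $gf=hf$, the assignment $b_1\otimes b_2\mapsto g(b_1)h(b_2)$ gives a well-defined map $B\otimes_A B\to C$, and the identity $b\otimes 1=1\otimes b$ (forced by (3)) then yields $g=h$.

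For $(3)\Leftrightarrow(4)$, I would apply $B\otimes_A-$ to the right-exact sequence $A\xrightarrow{f} B\to coker(f)\to 0$, obtaining $B\to B\otimes_A B\to B\otimes_A coker(f)\to 0$, whose first arrow is $f\otimes_A B$. Since this arrow is split injective via $m$, it is an isomorphism iff its cokernel vanishes, which is exactly (4). Finally, for $(1)\Leftrightarrow(2)$ I would invoke the adjunction lemma: $f_*$ is fully faithful iff the counit $\varepsilon$ is an isomorphism on every $B$-module. As $B\otimes_A-$ is right exact and preserves direct sums, and every $B$-module is a cokernel of a map between free $B$-modules, this reduces to $\varepsilon_B$ being an isomorphism, which is precisely (3). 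The main obstacle is really bookkeeping with bimodule structures---in particular, checking that $\iota_1$ and $\iota_2$ are genuine ring homomorphisms (requiring the canonical $B$-$B$-bimodule structure on $B\otimes_A B$ from the start) and that the resulting isomorphism in (3) is one of $B$-$B$-bimodules.
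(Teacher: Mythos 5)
The paper gives no proof of this proposition --- it is quoted directly from Stenstr\"om --- so your argument can only be assessed on its own terms. Most of it is correct: the tensor--hom adjunction, $(3)\Rightarrow(1)$, $(3)\Leftrightarrow(4)$, and the reduction of $(2)$ to the counit $\varepsilon_B$ being an isomorphism are all fine. (For $(4)\Rightarrow(3)$ you should add the one-line remark that the required equality $b\otimes 1=1\otimes b$ follows from injectivity of the multiplication map $m$: once $B\otimes_A f$ is invertible, $m$ is its inverse, and $m(b\otimes 1)=b=m(1\otimes b)$.)

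The genuine gap is in $(1)\Rightarrow(3)$, and it is precisely the point you dismiss at the end as ``bookkeeping'': the maps $\iota_1,\iota_2\colon B\to B\otimes_A B$ are \emph{not} ring homomorphisms, because $B\otimes_A B$ carries no natural ring structure for a general (noncommutative) ring map $f\colon A\to B$ --- the componentwise product $(b_1\otimes b_2)(c_1\otimes c_2)=b_1c_1\otimes b_2c_2$ is not well defined modulo the relation $b_1f(a)\otimes b_2=b_1\otimes f(a)b_2$. A $B$-$B$-bimodule structure does not help: the epimorphism property of $f$ only cancels \emph{ring} homomorphisms, so you cannot conclude $\iota_1=\iota_2$ from $\iota_1 f=\iota_2 f$. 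The standard repair is to package the difference $\iota_1-\iota_2$ into an honest ring map via the trivial extension (equivalently, the triangular matrix ring) $C=\left(\begin{smallmatrix} B & B\otimes_A B\\ 0 & B\end{smallmatrix}\right)$: the maps $g,h\colon B\to C$ given by $g(b)=\left(\begin{smallmatrix} b & 0\\ 0 & b\end{smallmatrix}\right)$ and $h(b)=\left(\begin{smallmatrix} b & b\otimes 1-1\otimes b\\ 0 & b\end{smallmatrix}\right)$ are ring homomorphisms satisfying $gf=hf$, whence $g=h$ and $b\otimes 1=1\otimes b$ for all $b\in B$; from there your argument ($b_1\otimes b_2=b_1b_2\otimes 1$, split by $m$) goes through verbatim. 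As written, your proof of $(1)\Rightarrow(3)$ is only valid in settings where $B\otimes_A B$ is actually a ring, e.g.\ when $A$ is commutative and central in $B$.
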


Consider the following sequence of left $A$-modules given by a ring epimorphism $f:A\ra B$
\[ \xymatrix{0\ar[r]& ker(f)\ar[r]& A\ar[r]^f & B\ar[r]& coker(f)\ar[r]& 0,}\]
which we unfold into two short exact sequences, namely
\begin{equation}\label{seq 1 ker}
\xymatrix{0\ar[r]& ker(f)\ar[r]& A\ar[r]^{\bar{f}\ \ }& f(A)\ar[r]&0,}
\end{equation}
\begin{equation}\label{seq 2 coker}
\xymatrix{0\ar[r]& f(A)\ar[r]& B\ar[r]& coker(f)\ar[r]&0.}
\end{equation}

The following easy observations follow from proposition \ref{ring epi}. 

\begin{corollary}\label{ker and coker}
Let $f:A\ra B$ be a ring epimorphism. The following assertions hold.
\begin{enumerate}
\item $B\otimes_A f(A)\cong B\otimes_A B \cong B$;
\item $B\otimes_A ker(f)\cong Tor_1^A(B,f(A))$;
\item If $Tor_1^A(B,B)=0$ then $Tor_1^A(B,coker(f))=0$.
\end{enumerate}
\end{corollary}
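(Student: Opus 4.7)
The plan is to derive all three statements from the two long exact $Tor$-sequences obtained by applying $B\otimes_A -$ to the short exact sequences (\ref{seq 1 ker}) and (\ref{seq 2 coker}), combined with the characterisations of ring epimorphisms in Proposition \ref{ring epi}.

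First I would establish (1). The isomorphism $B\otimes_A B \cong B$ is just Proposition \ref{ring epi}(3). For $B\otimes_A f(A)\cong B$, I would identify $f(A) = A/ker(f)$ as a left $A$-module, so that $B\otimes_A f(A)\cong B/B\cdot ker(f)$; the right $A$-action of $ker(f)$ on $B$ factors through $f$ and is therefore zero, so the multiplication map $b\otimes c\mapsto bc$ (using $f(A)\hookrightarrow B$) is an isomorphism.

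For (2), I would tensor (\ref{seq 1 ker}) with $B$ over $A$ and read off the fragment
\[ 0 \to Tor_1^A(B, f(A)) \to B\otimes_A ker(f) \to B\otimes_A A \to B\otimes_A f(A) \to 0. \]
Using (1), the rightmost arrow identifies with the identity of $B$, so the middle arrow is zero and the desired isomorphism drops out.

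Finally, for (3), I would tensor (\ref{seq 2 coker}) with $B$ and consider
\[ Tor_1^A(B,B) \to Tor_1^A(B, coker(f)) \to B\otimes_A f(A) \to B\otimes_A B \to B\otimes_A coker(f) \to 0. \]
By Proposition \ref{ring epi}(4) the last term vanishes, and under the identifications from (1) the penultimate arrow is again induced by $f(A)\hookrightarrow B$, hence becomes the identity on $B$; this forces $Tor_1^A(B, coker(f))\to B\otimes_A f(A)$ to be zero and therefore $Tor_1^A(B,B)\to Tor_1^A(B, coker(f))$ to be surjective, from which the hypothesis $Tor_1^A(B,B)=0$ gives the conclusion. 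I do not see any serious obstacle in this argument; the only point to watch is that the canonical isomorphisms of (1) really are the multiplication maps, so that the arrows induced by $A\twoheadrightarrow f(A)$ and $f(A)\hookrightarrow B$ become identities of $B$ under these identifications.
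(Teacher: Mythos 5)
Your proof is correct and follows essentially the same route as the paper: establish (1) first, then read off (2) and (3) from the long exact $Tor$-sequences obtained by applying $B\otimes_A-$ to (\ref{seq 1 ker}) and (\ref{seq 2 coker}). The only cosmetic difference is in (1), where the paper deduces $B\otimes_A f(A)\cong B$ by applying $B\otimes_A-$ to the epi-mono factorisation of $f$ (the induced surjection $B\otimes_A\bar{f}$ is forced to be injective because $B\otimes_A f$ is an isomorphism), while you compute $B\otimes_A (A/ker(f))\cong B/(B\cdot ker(f))=B$ directly; both arguments also identify the relevant induced maps as isomorphisms, which is precisely the point you rightly flag as the one to watch.
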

\begin{proof}
To prove (1), consider the commutative diagram given by the epi-mono factorisation of $f$
\[\xymatrix{A\ar[rr]^f\ar[rd]_{\bar{f}} && B\\ & f(A)\ar[ru]}\]
and apply to it the functor $B\otimes_A -$. By proposition \ref{ring epi}, $B\otimes_A f:B\otimes_A A\ra B\otimes_A B$ is an isomorphism and, therefore, the induced epimorphism $B\otimes_A \bar{f}$ is also a monomorphism.

The statements (2) and (3) follow from (1) by considering the long exact sequences given by applying the functor $B\otimes_A-$ to the sequences (\ref{seq 1 ker}) and (\ref{seq 2 coker}), respectively.

\end{proof}

Epiclasses of a ring $A$ can be classified by suitable subcategories of $A\mbox{-}Mod$. For a ring epimorphism $f:A\ra B$ we denote by $\mathcal{X}_B$ the essential image of the restriction functor.

\begin{theorem}[\cite{GdP}, Theorem 1.2, \cite{GeLe}, \cite{I}, Theorem 1.6.1]\label{bireflective}
There is a bijection between:
\begin{enumerate}
\item ring epimorphisms $A\ra B$ up to equivalence;
\item bireflective subcategories $\Xcal_B$ of $A\mbox{-}Mod$ (respectively, $Mod\mbox{-}A$), i.e., strict full subcategories of $A\mbox{-}Mod$ (respectively, $Mod\mbox{-}A$) closed under products, coproducts, kernels and cokernels.
\end{enumerate}
If $A$ is a finite dimensional $\Kbb$-algebra, this bijection can be restricted between:
\begin{enumerate}
\item ring epimorphisms $A\ra B$ up to equivalence, where $B$ is a finite dimensional $\Kbb$-algebra;
\item bireflective subcategories $\Xcal_B$ of $A\mbox{-}mod$ (respectively, $mod\mbox{-}A$), i.e., strict full functorially finite subcategories of $A\mbox{-}mod$ (respectively, $mod\mbox{-}A$) closed under kernels and cokernels.
\end{enumerate}
\end{theorem}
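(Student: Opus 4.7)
The plan is to set up the bijection by constructing maps in both directions and verifying they are mutually inverse.

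For the forward direction, given a ring epimorphism $f: A \ra B$, I send it to $\Xcal_B$, the essential image of $f_*$. Strict fullness follows from Proposition \ref{ring epi}(2), which gives that $f_*$ is fully faithful. The functor $f_*$ admits the left adjoint $B \otimes_A -$ recorded in Proposition \ref{ring epi} and also a right adjoint, $\Hom_A(B,-)$ regarded as a left $B$-module via the right $A$-action on $B$, hence $f_*$ preserves all limits and colimits, yielding closure of $\Xcal_B$ under products and coproducts. Exactness of restriction of scalars gives closure under kernels and cokernels.

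For the reverse direction, given a bireflective subcategory $\Xcal$ of $A\mbox{-}Mod$, let $L$ be the left adjoint to the inclusion (which exists because $\Xcal$ is closed under products and kernels, so the inclusion is limit-preserving; the special adjoint functor theorem then produces $L$). Because $\Xcal$ is also closed under coproducts and cokernels, the inclusion preserves colimits, hence admits a right adjoint, and therefore $L$ itself preserves colimits. Since $A$ is a compact projective generator of $A\mbox{-}Mod$, any colimit-preserving endofunctor of $A\mbox{-}Mod$ is naturally isomorphic to tensoring with an $A$-bimodule, so $L \simeq L(A) \otimes_A -$. The monad structure on $L$ (arising from the adjunction) then transfers to a ring structure on $B := L(A)$ with unit $\lambda_A(1)$, where $\lambda_A: A \ra L(A)$ is the unit of the adjunction. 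Idempotency of the monad $L$ translates to the multiplication map $L(A) \otimes_A L(A) \ra L(A)$ being an isomorphism, which by Proposition \ref{ring epi}(3) is exactly the statement that $\lambda_A$ is a ring epimorphism.

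To see the two constructions are mutually inverse, starting with $f: A \ra B$ the reflection associated to $\Xcal_B$ is $B \otimes_A -$, so $L(A) \cong B$ and the induced ring epimorphism is $f$; conversely, given $\Xcal$ with $B = L(A)$, an $A$-module $M$ lies in $\Xcal$ iff the unit $\eta_M: M \ra L(A) \otimes_A M$ is an isomorphism, iff $M$ lies in the essential image of $f_*$ for $f = \lambda_A$. For the finite-dimensional refinement, if $B$ is finite-dimensional over $\Kbb$ then $B \otimes_A -$ and $\Hom_A(B,-)$ preserve finite generation, and the induced left and right approximations exhibit $\Xcal_B \cap A\mbox{-}mod$ as functorially finite. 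Conversely, functorial finiteness together with closure under kernels and cokernels in $A\mbox{-}mod$ produces left and right adjoints to the inclusion on $A\mbox{-}mod$; applying the left adjoint to $A$ yields a finite-dimensional $B = L(A)$, and one reruns the argument above to extract the ring epimorphism.

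The main obstacle is the passage from the abstract bireflective datum $(\Xcal, L)$ to the ring structure on $L(A)$. The identification $L \simeq L(A) \otimes_A -$ is where bireflectivity, as opposed to mere reflectivity, is indispensable: it is the presence of the right adjoint to the inclusion that forces $L$ to preserve colimits and hence to be of tensor type. Once this identification is in place, the derivation of the ring structure and the epimorphism property from the monad axioms and Proposition \ref{ring epi}(3) is formal.
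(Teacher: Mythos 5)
The paper does not prove this theorem; it is quoted as background from \cite{GdP}, \cite{GeLe} and \cite{I}, so there is no internal argument to measure yours against. Your proposal does, however, follow the standard proof from those sources: essential image of the fully faithful restriction functor in one direction, and reflection functor, Eilenberg--Watts identification $L\simeq L(A)\otimes_A-$, and idempotent-monad structure in the other. Two steps deserve more care than you give them. First, the existence of the reflection $L$: the special adjoint functor theorem as you invoke it needs a cogenerator for $\Xcal$ (or a solution-set verification), which you do not supply; the cleaner route is to observe that $\Xcal$ is closed under all limits and all colimits of $A\mbox{-}Mod$ and apply the reflection theorem for locally presentable categories, or to construct the reflection of $A$ directly as in \cite{GdP}. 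Second, and more substantively, in the converse of the finite-dimensional refinement you pass from ``functorially finite'' to a left adjoint on $A\mbox{-}mod$ in one sentence. Left approximations furnished by functorial finiteness are neither unique nor functorial a priori, so the monad and Eilenberg--Watts machinery does not yet apply; upgrading approximations to a genuine reflection is precisely where closure under kernels and cokernels enters in \cite{GdP} and \cite{I}, and it is the real content of the restricted bijection. Apart from these two points (and the harmless observation that Proposition \ref{ring epi}(3) is reached from the isomorphy of the monad multiplication via the unit law), the architecture and all remaining verifications are correct.
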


Given a ring epimorphism $f:A\ra B$ consider the adjunction in proposition \ref{ring epi}. For a left $A$-module $M$, let $\psi_M: M\rightarrow B\otimes_A M$ be the unit of this adjunction at $M$. Clearly, we have that
\[\psi_M(m)=1_B\otimes m, \ \forall m\in M.\]
Note that $\psi_M$ for a left $B$-module $N$ is an isomorphism.
The following easy lemma shows that the map $\psi_M$ is the $\Xcal_B$-reflection of the left $A$-module $M$.

\begin{lemma}\label{loc mod}
Let $f:A\ra B$ be a ring epimorphism and $M$ a left $A$-module. For any left $B$-module $N$ and for any $A$-homomorphism $g:M\ra N$, $g$ factors uniquely through $\psi_M$.
\end{lemma}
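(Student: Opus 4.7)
The plan is to recognise this lemma as a direct consequence of the adjunction $(B\otimes_A -, f_*)$ of proposition \ref{ring epi}, combined with the fact, observed just before the lemma statement, that $\psi_N$ is an isomorphism whenever $N$ is already a left $B$-module.

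For existence of the factorisation, I would set $\tilde{g} := \psi_N^{-1} \circ (B\otimes_A g)$ and use naturality of $\psi$ with respect to $g: M \to N$ to verify $\tilde{g} \circ \psi_M = g$. Concretely this produces the expected map $b \otimes m \mapsto b \cdot g(m)$.

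For uniqueness, suppose $h: B \otimes_A M \to N$ is any $A$-linear map with $h \circ \psi_M = g$. By proposition \ref{ring epi} (equivalence of (1) and (2)), the restriction functor $f_*$ is fully faithful, and hence any $A$-linear map between two $B$-modules is automatically $B$-linear. Thus $h(b\otimes m) = b \cdot h(1 \otimes m) = b \cdot g(m) = \tilde{g}(b \otimes m)$ for all simple tensors, so $h = \tilde{g}$.

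I do not expect a genuine obstacle here: the statement is essentially formal, flowing from the adjunction and the characterisations of ring epimorphisms in proposition \ref{ring epi}. The only mild subtlety, and the point where the hypothesis that $f$ is a ring epimorphism is truly used, is in promoting $A$-linearity of the factoring map to $B$-linearity via the fully faithfulness of $f_*$; this is what prevents the existence of some spurious other factorisation.
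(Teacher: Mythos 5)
Your proof is correct and follows essentially the same route as the paper: both define $\tilde{g}:=\psi_N^{-1}\circ(B\otimes_A g)$ and check it is the unique factorisation. The paper leaves uniqueness as "by construction"; your explicit appeal to full faithfulness of $f_*$ to upgrade $A$-linearity to $B$-linearity is a valid (and welcome) filling-in of that step, not a different argument.
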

\begin{proof}
Since the map $\psi_N$ is an isomorphism, we can define a homomorphism of $A$-modules $$\tilde{g}:=\psi_N^{-1}\circ (B\otimes_A g).$$ It is clear that $g=\tilde{g}\circ\psi_M$ and, by construction, $\tilde{g}$ is the unique map satisfying this property.
\end{proof}

\begin{remark}\label{reflection}
In particular, note that the ring epimorphism $f:A\ra B$, regarded as a homomorphism of $A$-modules, is a $\Xcal_B$-reflection. Moreover, if $A$ is a finite dimensional $\Kbb$-algebra, then $f$ can be seen as the sum of the reflections of the indecomposable projective $A$-modules.
\end{remark}

\subsection{Flat and finite ring epimorphisms}
\begin{definition}
A ring epimorphism $f:A\rightarrow B$ is said to be 
\begin{itemize}
\item flat, if $f$ turns $B$ into a flat left $A$-module;
\item finite, if $f$ turns $B$ into a finitely generated projective left $A$-module;
\item 1-finite, if $f$ turns $B$ into a finitely presented left $A$-module of projective dimension less or equal than one.
\end{itemize}
\end{definition}

Clearly, every finite ring epimorphism is flat and 1-finite. Conversely, the following result holds.

\begin{proposition}[\cite{Cu}, Corollary 1.4]\label{cun}
If $A$ is a perfect ring, then a ring epimorphism $f:A\ra B$ is flat if and only if it is finite.
\end{proposition}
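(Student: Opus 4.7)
The implication that finite ring epimorphisms are flat is immediate, since every finitely generated projective left $A$-module is flat. The content of the proposition lies in the converse: assuming $A$ is (left) perfect and $f:A\rightarrow B$ is a flat ring epimorphism, I want to conclude that $B$ is a finitely generated projective left $A$-module. My plan splits this into a projectivity step and a finite-generation step.

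The projectivity of $B$ is a direct application of Bass's characterisation of perfect rings: over a left perfect ring, every flat left module is projective. Hence $B$ is projective as a left $A$-module, and the remaining question is whether it is finitely generated.

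For the finite generation, I would exploit that $A$, being perfect, is semiperfect, so one can write $1_A=e_1+\cdots+e_n$ as a sum of orthogonal primitive idempotents. This yields decompositions $A=\bigoplus_{i=1}^n A e_i$ and correspondingly $B=\bigoplus_{i=1}^n B f(e_i)$ as left $A$-modules, so it suffices to show each summand $Bf(e_i)$ is finitely generated. Passing to corners, the restriction $g_i: e_i A e_i \rightarrow f(e_i) B f(e_i)$ is again a flat ring epimorphism, with source the local perfect ring $S_i := e_i A e_i$. By Kaplansky's theorem (projectives over local rings are free), the target is a free $S_i$-module.

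The main obstacle is to bound this free rank. The plan is to reduce modulo the Jacobson radical $J(S_i)$: since $S_i$ is perfect, $J(S_i)$ is T-nilpotent, so finite generation can be detected after passing to the quotient by $J(S_i)$-multiples (a Nakayama-type argument valid for perfect rings). One arrives at a ring epimorphism whose source is the division ring $S_i/J(S_i)$, and ring epimorphisms out of a semisimple ring are classified by subsets of its simple factors (cf.\ Theorem \ref{bireflective}), so in the division-ring case the target must be either $0$ or the division ring itself. Lifting this back through the T-nilpotent radical then forces $f(e_i) B f(e_i)$ to be at most cyclic over $S_i$, hence finitely generated, and summing the $n$ pieces gives $B$ finitely generated. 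The most delicate point is ensuring that the relevant quotient actually carries a ring structure, i.e., that $J(S_i) \cdot f(e_i) B f(e_i)$ is a two-sided ideal of $f(e_i) B f(e_i)$; this is where the flatness of $f$ and the bimodule isomorphism $B\otimes_A B\cong B$ from Proposition \ref{ring epi} must be used in tandem.
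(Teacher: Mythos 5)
The paper does not actually prove this proposition; it quotes it from Cunningham \cite{Cu}, so there is no in-paper argument to compare against. Judged on its own terms, your proposal gets the cheap parts right: finitely generated projective $\Rightarrow$ flat is immediate, and the use of Bass's theorem (over a left perfect ring every flat left module is projective) correctly reduces the converse to showing that the projective left $A$-module $B$ is finitely generated. The decomposition $B=\bigoplus_i Bf(e_i)$ into left $A$-module summands is also fine.

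The finite-generation argument, however, has two genuine gaps. First, the claim that the corner map $e_iAe_i\rightarrow f(e_i)Bf(e_i)$ is again a (flat) ring epimorphism is asserted, not proved, and it does not follow from $B\otimes_AB\cong B$: the natural map $e_iBe_i\otimes_{e_iAe_i}e_iBe_i\rightarrow e_i(B\otimes_AB)e_i$ is not an isomorphism in general, so the epimorphism property does not pass to corners without further argument. Second, and fatally, even if you knew that every diagonal corner $f(e_i)Bf(e_i)$ were cyclic over $e_iAe_i$, this would not bound $Bf(e_i)$ as an $A$-module. Writing the projective module $Bf(e_i)\cong\bigoplus_j(Ae_j)^{(\kappa_{ij})}$, the corner you analyse is $e_i\cdot Bf(e_i)\cong\bigoplus_j(e_iAe_j)^{(\kappa_{ij})}$, which carries no information about any multiplicity $\kappa_{ij}$ with $e_iAe_j=0$; equivalently, your final step ``summing the $n$ pieces'' only accounts for $\bigoplus_if(e_i)Bf(e_i)$ and ignores all off-diagonal components $f(e_j)Bf(e_i)$ with $j\neq i$. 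So the reduction to local corner rings cannot, as it stands, control the rank of $B$. A workable route (and essentially Cunningham's) stays with the global decomposition $B\cong\bigoplus_j(Ae_j)^{(\kappa_j)}$ and exploits the trace ideal $\tau_B(A)$, which for a projective module over a semiperfect ring is generated by an idempotent, together with the epimorphism identity $B\otimes_AB\cong B$ and $\operatorname{End}_A({}_AB)\cong B^{\mathrm{op}}$ to force the $\kappa_j$ to be finite; the division-ring observation you make via Theorem \ref{bireflective} is correct but enters only after this global bookkeeping is in place.
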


\begin{remark}\label{A-proj}
For a perfect ring $A$, a ring epimorphism $A\rightarrow B$ is finite if and only if every finitely generated projective left $B$-module is finitely generated and projective as a left $A$-module. Equivalently, $B$ is finitely generated as a left $A$-module and for all $M$ in $B\mbox{-}mod$ its projective cover in $A\mbox{-}mod$ is also a left $B$-module.
\end{remark}

Recall that finite dimensional $\Kbb$-algebras are perfect rings. From remark \ref{A-proj} and theorem \ref{bireflective} we get the following immediate lemma.

\begin{lemma}\label{finite bij} Let $A$ be a finite dimensional $\Kbb$-algebra. There is a bijection between
\begin{enumerate}
\item finite ring epimorphisms $A\ra B$ up to equivalence;
\item bireflective subcategories $\Xcal_B$ of $A\mbox{-}mod$ (respectively, $mod\mbox{-}A$) such that projective objects of $\Xcal_B$ are projective $A$-modules.

\end{enumerate}
\end{lemma}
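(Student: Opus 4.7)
The plan is to restrict the second bijection of Theorem \ref{bireflective} by imposing the finiteness condition on both sides. Since a finite dimensional $\Kbb$-algebra is a perfect ring, Proposition \ref{cun} combined with Remark \ref{A-proj} tells me that a ring epimorphism $f:A\ra B$ (with $B$ finite dimensional) is finite if and only if every finitely generated projective left $B$-module is projective as a left $A$-module. So the whole task reduces to showing that, under the existing bijection of Theorem \ref{bireflective}, this condition on $f$ matches the stated condition on $\Xcal_B$.

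The key first step is to identify projective objects of $\Xcal_B$ with finitely generated projective left $B$-modules. Since $f_*\colon B\mbox{-}Mod\ra A\mbox{-}Mod$ is fully faithful and exact (Proposition \ref{ring epi}), restricting to finitely generated objects yields an equivalence of abelian categories $B\mbox{-}mod\simeq \Xcal_B$; in particular this equivalence preserves projective objects. Note that $B\in \Xcal_B$ because $B$ is finite dimensional over $\Kbb$ and hence finitely generated over $A$, so the condition on $\Xcal_B$ does apply to $B$ itself. Once this identification is in place, both implications are essentially immediate: if $f$ is finite then $B$ is $A$-projective, so any direct summand of $B^n$ is $A$-projective; conversely, applying the hypothesis to the projective generator ${}_BB\in \Xcal_B$ shows that $B$ is projective as a left $A$-module, and together with finite generation (automatic in this setting) this means $f$ is finite.

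There is no serious obstacle here: the lemma amounts to translating Remark \ref{A-proj} through the existing bijection. The only point that deserves explicit verification is that the abelian structure on $\Xcal_B$ coming from being closed under kernels and cokernels inside $A\mbox{-}mod$ really does coincide with the abelian structure transported from $B\mbox{-}mod$ along $f_*$, so that \emph{projective in $\Xcal_B$} matches \emph{projective as a $B$-module}; this compatibility follows from exactness and full faithfulness of $f_*$.
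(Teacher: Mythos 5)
Your proposal is correct and follows essentially the same route as the paper, which states the lemma as an immediate consequence of Remark \ref{A-proj} and Theorem \ref{bireflective}; you have simply filled in the details (the equivalence $B\mbox{-}mod\simeq\Xcal_B$ via the exact, fully faithful restriction functor, and the resulting matching of projective objects). The appeal to Proposition \ref{cun} is superfluous, since Remark \ref{A-proj} already characterises finite ring epimorphisms over perfect rings, but this does not affect the argument.
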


\subsection{Homological ring epimorphisms}

We are interested in ring epimorphisms with particularly nice homological properties. Following Geigle and Lenzing (\cite{GeLe}), a ring epimorphism $f:A\ra B$ is said to be homological if $Tor_i^A(B,B)=0$, for all $i>0$. 

For any ring epimorphism $f:A\ra B$, we denote by $K_f$ the object 
\[\xymatrix{A\ar[r]^f&B}\]
in the category of complexes of left $A$-modules, where $A$ lies in position $-1$. Note that, regarded as an object of $\Dcal(A)$, $K_f$ is isomorphic to the cone of the map $f$, seen as a map of complexes concentrated in degree zero. The following well-known result is an analogue of proposition \ref{ring epi} for homological ring epimorphisms.

\begin{proposition}\label{hom ring epi}
The following are equivalent for a ring homomorphism $f:A\rightarrow B$.  
\begin{enumerate}
\item $f$ is a homological ring epimorphism;
\item The derived restriction functor $f_*:\Dcal(B)\rightarrow \Dcal(A)$ is fully faithful;
\item $B\otimes_A^\Lbb f: B\rightarrow B\otimes_A^\Lbb B$ is an isomorphism in $\Dcal(A)$;
\item $B\otimes_A^\Lbb K_f=0$.
\end{enumerate}
Moreover, the functor $B\otimes_A^\Lbb-$ is left adjoint to $f_*$.
\end{proposition}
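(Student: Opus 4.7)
The plan is to first establish the adjunction and then prove the equivalences through the chain $(1)\Leftrightarrow(3)\Leftrightarrow(4)$ together with $(1)\Leftrightarrow(2)$. The adjunction $B\otimes_A^\Lbb-\dashv f_*$ descends from the underived adjunction of Proposition \ref{ring epi}: since $f_*$ is exact it passes verbatim to the derived category, while $B\otimes_A-$ admits a left derived functor obtained via K-projective resolutions, and the underived unit and counit derive to give the required derived adjunction.

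For $(1)\Leftrightarrow(3)$, I would take cohomology. In negative degrees $H^{-i}(B\otimes_A^\Lbb B)=Tor_i^A(B,B)$, and in degree zero the map $B\otimes_A^\Lbb f$, after the canonical identification $B\otimes_A^\Lbb A\cong B$, becomes the map $B\to B\otimes_A B$, $b\mapsto b\otimes 1$, which is precisely the one featured in Proposition \ref{ring epi}(3). Thus (3) is equivalent to $f$ being a ring epimorphism together with $Tor_i^A(B,B)=0$ for all $i>0$, i.e.\ exactly (1). For $(3)\Leftrightarrow(4)$, I would apply $B\otimes_A^\Lbb-$ to the distinguished triangle $A\ra B\ra K_f\ra A[1]$ in $\Dcal(A)$; because $B\otimes_A^\Lbb A\cong B$, the resulting triangle identifies $B\otimes_A^\Lbb K_f$ with the cone of $B\otimes_A^\Lbb f$, so one vanishes if and only if the other is an isomorphism.

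For $(1)\Leftrightarrow(2)$, I would use the standard criterion that a right adjoint is fully faithful iff its counit is an isomorphism. The counit at $N\in\Dcal(B)$ is the natural map $\varepsilon_N\colon B\otimes_A^\Lbb f_*(N)\ra N$, and the triangle identity applied to $N=B$ forces $\varepsilon_B\circ(B\otimes_A^\Lbb f)=\mathrm{id}_B$ under the identification $B\otimes_A^\Lbb A\cong B$. Hence $\varepsilon_B$ is an isomorphism precisely when (3) holds. To pass from $\varepsilon_B$ being an isomorphism to $\varepsilon_N$ being an isomorphism for every $N\in\Dcal(B)$, I would observe that both $(B\otimes_A^\Lbb-)\circ f_*$ and $\mathrm{id}_{\Dcal(B)}$ are triangulated functors commuting with arbitrary coproducts, so the full subcategory of $\Dcal(B)$ where $\varepsilon$ is an isomorphism is a localising subcategory; since $B$ is a compact generator of $\Dcal(B)$, containment of $B$ forces this subcategory to be all of $\Dcal(B)$.

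The main obstacle I anticipate is not a deep computation but the bookkeeping around the derived adjunction, specifically pinning down the identification that makes $\varepsilon_B$ the inverse of $B\otimes_A^\Lbb f$; once this is in place, the equivalences all reduce to either a cohomology calculation, a triangle rotation, or the standard localising-subcategory argument.
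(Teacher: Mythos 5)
Your proposal is correct, and for the equivalences $(1)\Leftrightarrow(3)$ and $(3)\Leftrightarrow(4)$ it coincides with the paper's argument: take cohomology of $B\otimes_A^\Lbb f$ to reduce (3) to the ring-epimorphism condition in degree zero plus the vanishing of $Tor_i^A(B,B)$ for $i>0$, and rotate the triangle $A\ra B\ra K_f\ra A[1]$ after applying $B\otimes_A^\Lbb-$ to get (4). (Your cohomological indexing $H^{-i}=Tor_i$ is in fact cleaner than the paper's.) The one place where you genuinely diverge is $(1)\Leftrightarrow(2)$: the paper simply cites Geigle--Lenzing (\cite{GeLe}, theorem 4.4), whereas you prove it from scratch via the standard criterion that the right adjoint $f_*$ is fully faithful iff the counit $\varepsilon$ of $B\otimes_A^\Lbb-\dashv f_*$ is invertible, check $\varepsilon_B$ against $B\otimes_A^\Lbb f$ using a triangle identity, and then propagate from $B$ to all of $\Dcal(B)$ because the objects where $\varepsilon$ is invertible form a localising subcategory containing the compact generator $B$. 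This buys a self-contained proof at the cost of the (routine but real) bookkeeping you flag: one must check that the underived adjunction descends to the derived level, that $f_*$ and $B\otimes_A^\Lbb-$ both preserve coproducts, and that the unit at $A$ identifies with $f$ under $B\otimes_A^\Lbb A\cong B$ so that $\varepsilon_B$ invertible is equivalent to (3). A small wording slip: the triangle identity is indexed by objects of $\Dcal(A)$, so you should apply it at $M=A$ (whose image is $B$) rather than "at $N=B$"; this does not affect the argument.
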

\begin{proof}
The fact that (1) is equivalent to (2) can be found in \cite{GeLe} (theorem 4.4).  

It is easy to see that (1) is equivalent to (3). Indeed, note that $H^0(B\otimes_A^\Lbb f)=B\otimes_A f$ is an isomorphism if and only if $f$ is a ring epimorphism. Also, for $i>0$, $H^i(B\otimes_A^\Lbb f)=Tor_i^A(B,f)$ is the zero map and it is an isomorphism if and only if $H^i(B\otimes_A^{\Lbb} B)=Tor_i^A(B,B)=0$. 

Finally, we check that (3) is equivalent to (4). Consider the triangle in $\Dcal(A)$
\[\xymatrix{A\ar[r]^f& B\ar[r]& K_f\ar[r]& A[1]}\]
and apply to it the triangle functor $B\otimes_A^\Lbb-$. Clearly, $B\otimes_A^\Lbb f$ is an isomorphism if and only if $B\otimes_A^\Lbb K_f=0$, thus finishing the proof.
\end{proof}

Homological ring epimorphisms of $A$ play a role in understanding how to \textit{decompose} the derived category $\Dcal(A)$ into other triangulated categories. This \textit{decomposition} is formalised by the notion of recollement.

\begin{definition}
Let $\Xcal, \Ycal, \Dcal$ be triangulated categories. A
recollement of $\Dcal$ by $\Xcal$ and $\Ycal$ is a diagram of six triangle
functors, satisfying the properties below.
\begin{equation}\nonumber
\begin{xymatrix}{\mathcal{Y}\ar[r]^{i_*}&\mathcal{D}\ar@<3ex>[l]_{i^!}\ar@<-3ex>[l]_{i^*}\ar[r]^{j^*}&\mathcal{\mathcal{X}}\ar@<3ex>_{j_*}[l]\ar@<-3ex>_{j_!}[l]}.
\end{xymatrix}
\end{equation}\newpage
\begin{enumerate}
\item $(i^\ast,i_\ast)$,\,$(i_*,i^!)$,\,$(j_!,j^*)$ ,\,$(j^\ast,j_\ast)$
are adjoint pairs;

\item $i_\ast,\,j_\ast,\,j_!$  are full embeddings;

\item  $i^!\circ j_\ast=0$ (and thus also $j^*\circ i_*=0$ and
$i^\ast\circ j_!=0$);

\item for each $Z\in \Dcal$ there are triangles \[i_* i^!Z\to
Z\to j_\ast j^\ast Z\to i_* i^!Z[1]\]
\[j_! j^* Z\to Z\to
i_\ast i^\ast Z\to j_!j^*Z[1].\]
\end{enumerate}
\end{definition}

We now recall the following result from \cite{NS}, stating how homological ring epimorphisms give rise to recollements.

\begin{theorem}[\cite{NS}, \S4]\label{rec hom}
Let $f:A\ra B$ be a homological ring epimorphism. Then the derived restriction functor $f_*$ induces a recollement
$$\begin{xymatrix}{\mathcal{D}(B)\ar[r]^{f_*}&\mathcal{D}(A)\ar@<2.5ex>[l]\ar@<-2.5ex>[l]\ar[r]&Tria(K_f)\ar@<2.5ex>[l]\ar@<-2.5ex>[l]},
\end{xymatrix}$$
where $Tria(K_f)$ denotes the smallest triangulated subcategory of $\Dcal(A)$ containing $K_f$ and closed under coproducts.

\end{theorem}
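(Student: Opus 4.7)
The plan is to produce the six functors of the recollement directly, using the canonical triangle $A \xrightarrow{f} B \to K_f \to A[1]$ in $\Dcal(A)$ as the source of both gluing triangles.

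On the $\Dcal(B)$ side the adjoints are immediate from Proposition \ref{hom ring epi}: $i_* := f_*$ is fully faithful, with left adjoint $i^* = B \otimes_A^{\Lbb} -$ and right adjoint $i^! = R\Hom_A(B, -)$. On the $Tria(K_f)$ side I would take $j_!$ to be the inclusion, and obtain the remaining two functors from the triangles resulting when the exact functors $-\otimes_A^{\Lbb} Z$ and $R\Hom_A(-, Z)$ are applied to the canonical triangle: for each $Z \in \Dcal(A)$,
\begin{equation}\nonumber
(K_f \otimes_A^{\Lbb} Z)[-1] \to Z \to f_* f^* Z \to K_f \otimes_A^{\Lbb} Z,
\end{equation}
\begin{equation}\nonumber
f_* f^! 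Z \to Z \to R\Hom_A(K_f, Z)[1] \to f_* f^! Z[1].
\end{equation}
The object $K_f \otimes_A^{\Lbb} Z$ automatically lies in $Tria(K_f)$: the functor $K_f \otimes_A^{\Lbb} -$ is triangulated, preserves coproducts and sends $A$ to $K_f$, so it maps $\Dcal(A) = Tria(A)$ into $Tria(K_f)$. These two triangles will serve as the gluing triangles, with $j_! j^* Z := (K_f \otimes_A^{\Lbb} Z)[-1]$ and $j_* j^* Z := R\Hom_A(K_f, Z)[1]$.

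The crucial input is the orthogonality $\Hom_{\Dcal(A)}(V[n], f_* Y) = 0$ for every $V \in Tria(K_f)$, $Y \in \Dcal(B)$ and $n \in \Z$. The full subcategory of such $V$ is localising, so it suffices to verify the case $V = K_f$: by the adjunction $(f^*, f_*)$, $\Hom_{\Dcal(A)}(K_f[n], f_* Y) \cong \Hom_{\Dcal(B)}((B \otimes_A^{\Lbb} K_f)[n], Y) = 0$, using $B \otimes_A^{\Lbb} K_f = 0$ from Proposition \ref{hom ring epi}(4). Applying $\Hom_{\Dcal(A)}(V, -)$ to the first triangle above then yields the natural bijection $\Hom_{\Dcal(A)}(V, Z) \cong \Hom_{\Dcal(A)}(V, j_! j^* Z)$, proving the adjunction $(j_!, j^*)$; dually, the second triangle establishes $(j^*, j_*)$. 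The remaining recollement axioms---fully faithfulness of $j_*$, vanishing of $i^! j_*$, and compatibility of the triangles---then follow formally.

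The main obstacle is to promote $j^*$ and $j_*$ from object assignments to bona fide functors, with naturality of the two displayed triangles in $Z$. This can be handled either by a direct mapping-cone argument or, more economically, by invoking Brown representability in the compactly generated category $\Dcal(A)$ to produce $j^*$ as the formal right adjoint of the coproduct-preserving inclusion $Tria(K_f) \hookrightarrow \Dcal(A)$, and then identifying the values of $j_! j^*$ and $j_* j^*$ with the terms of the two gluing triangles above.
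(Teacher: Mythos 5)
The paper does not prove this statement; it is quoted directly from Nicol\'as--Saor\'in \cite{NS}, so there is no internal proof to compare against, and your job is really to reconstruct the cited argument. Your construction is the standard one and is correct as far as it goes: the triangle $A\to B\to K_f\to A[1]$, viewed as a triangle of complexes of $A$-$A$-bimodules, produces the two gluing triangles; the orthogonality $\Hom_{\Dcal(A)}(V[n],f_*Y)=0$ reduces, over the localising subcategory $Tria(K_f)$, to the single vanishing $B\otimes_A^{\Lbb}K_f=0$ exactly as you say; and Brown representability legitimately supplies functoriality, since $j^*=(K_f\otimes_A^{\Lbb}-)[-1]$ preserves coproducts and $\Dcal(A)$ is compactly generated (note $K_f$ itself need not be compact here, so this detour is genuinely needed). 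Two points deserve to be made explicit rather than filed under ``follow formally''. First, for $K_f\otimes_A^{\Lbb}-$ and $R\Hom_A(K_f,-)$ to be endofunctors of $\Dcal(A)$ you must use the bimodule structure on the complex $A\to B$; the adjunction $\Hom_{\Dcal(A)}(Z',R\Hom_A(K_f,Z)[1])\cong\Hom_{\Dcal(A)}((K_f\otimes_A^{\Lbb}Z')[-1],Z)$ is then what identifies $R\Hom_A(K_f,-)[1]$ as $j_*j^*$, given the $(j_!,j^*)$ adjunction you already established. Second, the axioms involving $j_*$ and $i^!$ (for instance $i^!j_*=0$) require the vanishing $K_f\otimes_A^{\Lbb}B=0$, i.e.\ the right-module analogue of Proposition \ref{hom ring epi}(4); this does hold, because $f\otimes_A B=B\otimes_A f$ is an isomorphism by Proposition \ref{ring epi}(3) and the condition $Tor_i^A(B,B)=0$ is left-right symmetric, but it is an extra input rather than a formal consequence of what you verified. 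With those two remarks added, your argument is complete and agrees in substance with the cited source.
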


\subsection{Universal localisations}

The following theorem defines and shows existence of universal localisations.

\begin{theorem}[\cite{Sch}, Theorem 4.1]\label{uni loc}
Let $A$ be a ring and $\Sigma$ a set of maps between finitely generated projective left $A$-modules. Then there is a ring $A_\Sigma$, unique up to isomorphism, and a ring homomorphism $f_\Sigma: A\rightarrow A_\Sigma$ such that
\begin{enumerate}
\item $A_\Sigma \otimes_A \sigma$ is an isomorphism of left $A$-modules for all $\sigma\in\Sigma$;
\item every ring homomorphism $g:A\rightarrow B$ such that $B\otimes_A \sigma$ is an isomorphism for all $\sigma\in\Sigma$ factors in a unique way through $f_\Sigma$, i.e., there is a commutative diagram of the form
\begin{equation}\nonumber
\xymatrix{A\ar[rr]^g\ar[rd]_{f_\Sigma}&&B\\ & A_\Sigma.\ar[ru]_{\exists! \tilde{g}}}
\end{equation}
\end{enumerate}
\end{theorem}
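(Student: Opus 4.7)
The plan is to construct $A_\Sigma$ explicitly by generators and relations, where the generators encode entries of formal inverse matrices for each $\sigma\in\Sigma$ and the relations force them to be genuine two-sided inverses. Uniqueness of $A_\Sigma$ up to isomorphism then follows formally from the universal property (2), so the real work is existence.

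The first step is to translate each map in $\Sigma$ into matrix form. Since $P_\sigma$ and $Q_\sigma$ are finitely generated projective, choose idempotents $e_\sigma\in M_{n_\sigma}(A)$ and $f_\sigma\in M_{m_\sigma}(A)$ together with isomorphisms $P_\sigma\cong A^{n_\sigma}e_\sigma$ and $Q_\sigma\cong A^{m_\sigma}f_\sigma$, so that $\sigma$ corresponds to a matrix $M_\sigma\in f_\sigma M_{m_\sigma,n_\sigma}(A)e_\sigma$. A standard argument then shows that, for any ring homomorphism $g:A\ra B$, the base change $B\otimes_A\sigma$ is an isomorphism if and only if there exists a matrix $N_\sigma\in e_\sigma M_{n_\sigma,m_\sigma}(B)f_\sigma$ satisfying $M_\sigma N_\sigma=f_\sigma$ and $N_\sigma M_\sigma=e_\sigma$, and such $N_\sigma$ is uniquely determined when it exists.

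Next, define $A_\Sigma$ as the quotient of the free ring under $A$ on variables $\{x^\sigma_{ij}\}$ (indexed by $\sigma\in\Sigma$ and by the entries of the putative inverse matrices) by the two-sided ideal generated by the entries of $M_\sigma X_\sigma-f_\sigma$ and $X_\sigma M_\sigma-e_\sigma$, where $X_\sigma=(x^\sigma_{ij})$; let $f_\Sigma:A\ra A_\Sigma$ be the canonical structural map. Condition (1) is then immediate: the image of $X_\sigma$ in $M_{n_\sigma,m_\sigma}(A_\Sigma)$ provides an inverse to $A_\Sigma\otimes_A\sigma$. For the universal property (2), given $g:A\ra B$ with each $B\otimes_A\sigma$ invertible, let $N_\sigma^B$ be the unique inverse matrices over $B$ and define $\tilde{g}:A_\Sigma\ra B$ by $\tilde{g}\circ f_\Sigma=g$ and $\tilde{g}(x^\sigma_{ij})=(N_\sigma^B)_{ij}$; this is well defined because the defining relations of $A_\Sigma$ translate to precisely the identities satisfied by $N_\sigma^B$, and uniqueness is forced because any extension of $g$ must send $x^\sigma_{ij}$ to the entries of the unique inverse.

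I expect the most delicate technical point to be verifying that the construction is independent of the choice of idempotent representatives for $P_\sigma$ and $Q_\sigma$, and more generally that different matrix presentations of the same $\sigma$ yield canonically isomorphic rings; this is handled either by comparing presentations directly or, more slickly, by invoking the universal property (2) itself against both candidates. A more abstract alternative is to observe that the full subcategory of rings under $A$ in which every $\sigma\in\Sigma$ is inverted after base change is closed under limits and filtered colimits, hence reflective, and to obtain $A_\Sigma$ via the adjoint functor theorem; this approach sidesteps matrix choices entirely, at the cost of making the explicit structure of $A_\Sigma$ less transparent.
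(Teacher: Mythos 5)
The paper gives no proof of this statement: it is quoted verbatim from Schofield (\cite{Sch}, Theorem 4.1), so the comparison is against the standard Cohn--Schofield argument. Your proposal follows exactly that route -- represent each $\sigma\in\Sigma$ by a matrix via idempotent presentations of $P_\sigma$ and $Q_\sigma$, adjoin the entries of a formal inverse, and impose the relations making it a two-sided inverse -- and the overall structure (existence by generators and relations, uniqueness formally from the universal property) is correct.

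There is, however, one concrete omission in your presentation of $A_\Sigma$: you must also impose the relations $X_\sigma=e_\sigma X_\sigma f_\sigma$ (entrywise, $x^\sigma_{ij}=\sum e_{ik}x^\sigma_{kl}f_{lj}$). Without them the quotient you define is too large and the uniqueness clause of (2) fails. For instance, take $\sigma$ to be the identity map of $P=Ae$ for a nontrivial idempotent $e$, presented by the $1\times 1$ matrix $M_\sigma=e$. Your relations are $ex=e$ and $xe=e$, which do not force $x=e$: in any ring $B$ under $A$ the elements $x=e+(1-e)b(1-e)$ all satisfy them, so the inverse matrix is not unique in $M_{n,m}(B)$ but only in $e_\sigma M_{n,m}(B)f_\sigma$ -- which is precisely where you (correctly) located it in your preliminary lemma. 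The extra relation transports that constraint into the presentation; with it, your argument that $\tilde g$ exists and is unique goes through, since any ring homomorphism $h:A_\Sigma\to B$ extending $g$ must send $X_\sigma$ to the unique inverse lying in $e_\sigma M_{n,m}(B)f_\sigma$. The independence-of-choices issue you raise at the end is indeed best dispatched by playing the universal property of two candidate constructions against each other, as you suggest.
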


We say that the ring $A_\Sigma$ in the theorem is the universal localisation of $A$ at $\Sigma$. It is well-known that the homomorphism $f_\Sigma: A\rightarrow A_\Sigma$ is a ring epimorphism with $Tor_1^A(A_\Sigma,A_\Sigma)=0$ (\cite{Sch}). 
The functor $A_\Sigma\otimes_A -$ is called the localisation functor of the universal localisation and it is left adjoint to the restriction functor $f_{\Sigma *}: A_{\Sigma}\mbox{-}Mod\ra A\mbox{-}Mod$ (see proposition \ref{ring epi}). For a left $A$-module M we call the $\Xcal_{A_{\Sigma}}$-reflection $\psi_M$ the localisation map of $M$ (see lemma \ref{loc mod}).

We can also define universal localisations with respect to a certain set of $A$-modules. Indeed, let $\Ucal$ be a set of finitely presented left $A$-modules of projective dimension less or equal than one. We denote by $A_{\Ucal}$ the universal localistaion of $A$ at $\Sigma=\{\sigma_U|\,U\in\Ucal\}$, where $\sigma_U:P\rightarrow Q$ is a projective resolution of $U$ in $A\mbox{-}mod$. Note that $A_{\Ucal}$ is well-defined by \cite{Cohn} and we will call it the universal localisation of A at $\Ucal$.
The following easy example shows that universal localisations do not, in general, yield homological ring epimorphisms.

\begin{example}\label{ex1}
Let $A$ be the quotient of the path algebra over $\mathbb{K}$ of the quiver $$\xymatrix{1\ar[r]^{\alpha} & 2\ar[r]^{\beta} & 3}$$ by the ideal generated by $\beta\alpha$. Consider the universal localisation of A at $\Ucal:=\{P_2\}$. Note that $A_{\Ucal}$ and $A/Ae_2A$ lie in the same epiclass of $A$. It is easy to check that $Tor^A_2(A_{\Ucal},A_{\Ucal})\not= 0$ and, hence, the ring epimorphism $A\rightarrow A_{\Ucal}$ is not homological.
\end{example}

\subsection{Localisations with respect to Gabriel filters}

These localisations generalise the torsion-theoretical properties of Ore localisations. In fact, right Gabriel filters in a ring $A$ are in bijection with hereditary torsion classes in $A\mbox{-}Mod$. Also, in contrast with Ore or universal localisation, the localisation functor associated to a Gabriel filter is not necessarily the tensor product with the localised ring. For details and definitions we refer the reader to \cite{St}. In what follows we discuss some properties of these localisations that motivate some of the questions answered in this paper. We start by discussing how flat ring epimorphisms relate to this notion of localisation.

\begin{theorem}[\cite{St}, Theorem XI.2.1, Proposition XI.3.4]\label{Gabriel}
A localisation with respect to a Gabriel filter yields a flat ring epimorphism if and only if the localisation functor is naturally equivalent to the tensor product with the localised ring.
Moreover, any flat ring epimorphism $f:A\ra B$ lies in the same epiclass as the localisation of $A$ with respect to a Gabriel filter of right ideals of $A$. 
\end{theorem}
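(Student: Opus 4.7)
The plan is to work with the standard characterisation of a Gabriel localisation: for a (right) Gabriel filter $\mathcal{F}$, the localisation $a_{\mathcal{F}}(M)$ is the unique $\mathcal{F}$-closed module equipped with a map from $M$ whose kernel and cokernel are $\mathcal{F}$-torsion. In particular $A_{\mathcal{F}}:=a_{\mathcal{F}}(A)$ carries a canonical ring structure and the unit $\lambda:A\to A_{\mathcal{F}}$ is a ring homomorphism; moreover $a_{\mathcal{F}}$ is always left exact, and its essential image consists of $A_{\mathcal{F}}$-modules. The bireflective subcategory in proposition \ref{ring epi} and the hereditary torsion class of $\mathcal{F}$ will be the main bookkeeping devices.

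For the first statement, the direction $(\Leftarrow)$ is quick: if $a_{\mathcal{F}}\cong A_{\mathcal{F}}\otimes_A -$, then the left exact functor $a_{\mathcal{F}}$ is also right exact, so $A_{\mathcal{F}}$ is flat over $A$; furthermore $A_{\mathcal{F}}\otimes_A A_{\mathcal{F}}=a_{\mathcal{F}}(A_{\mathcal{F}})\cong A_{\mathcal{F}}$ via the canonical map, so condition (3) of proposition \ref{ring epi} gives that $\lambda$ is a ring epi. For $(\Rightarrow)$, assuming $\lambda$ is already a flat ring epi, I would build a natural transformation $M\otimes_A A_{\mathcal{F}}\to a_{\mathcal{F}}(M)$ extending the identity on $A_{\mathcal{F}}$ (using the adjunction in proposition \ref{ring epi}) and prove it is an isomorphism by showing that $M\otimes_A A_{\mathcal{F}}$ is itself $\mathcal{F}$-closed: flatness forces $M\otimes_A A_{\mathcal{F}}$ to be $\mathcal{F}$-torsion free, and the ring epi property together with flatness (applied to short exact sequences $0\to I\to A\to A/I\to 0$ with $I\in\mathcal{F}$) gives the required divisibility.

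For the second assertion, I would start with a flat ring epimorphism $f:A\to B$ and define the candidate filter
\[\mathcal{F}_f:=\{I\leq A_A \mid B\otimes_A A/I=0\}=\{I\leq A_A \mid Bf(I)=B\}.\]
The Gabriel filter axioms have to be checked directly; the nontrivial transitivity axiom follows from the flatness of $B$ combined with proposition \ref{ring epi}(4). By construction $ker(f)$ and $coker(f)$ are $\mathcal{F}_f$-torsion, while $B$ is $\mathcal{F}_f$-closed: $\Hom_A(A/I,B)=0$ for $I\in\mathcal{F}_f$ because $B\cong B\otimes_A B$ by proposition \ref{ring epi}(3), and $\Ext^1_A(A/I,B)=0$ by the flatness of $B$ applied to the sequence $0\to I\to A\to A/I\to 0$. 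Uniqueness of the $\mathcal{F}_f$-closure then identifies $f$ with $A\to A_{\mathcal{F}_f}$ up to equivalence of ring epimorphisms.

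The main obstacle will be the converse direction of the first part: to promote a flat ring epi $\lambda$ (together with its naturally associated filter $\mathcal{F}$) into an actual natural equivalence of functors, one must verify that $-\otimes_A A_{\mathcal{F}}$ takes values in $\mathcal{F}$-closed modules. This is cleanest to do after identifying $\mathcal{F}$ with the $\mathcal{F}_\lambda$ of the second part, at which point the $\Hom$/$\Ext$ vanishing sketched above can be invoked uniformly on all $M$; the remaining content of the theorem is formal manipulation of the bireflection $\Xcal_{A_{\mathcal{F}}}$ of $\mathrm{Mod}\mbox{-}A$ given by proposition \ref{ring epi}.
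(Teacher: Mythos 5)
The paper does not actually prove this statement: it is quoted verbatim from Stenstr\"{o}m \cite{St} (Theorem XI.2.1 and Proposition XI.3.4) and used as a black box, so there is no internal proof to compare your attempt against. Judged on its own terms, your outline follows the standard textbook route (characterise the localisation $a_{\mathcal{F}}(M)$ as the unique $\mathcal{F}$-closed module receiving a map from $M$ with torsion kernel and cokernel, then match $-\otimes_A A_{\mathcal{F}}$ against it), and the easy pieces --- the $(\Leftarrow)$ direction, the $\Hom$/$\Ext$ vanishing showing that $B$ is closed, and the torsion statements for $\ker(f)$ and $coker(f)$ --- are correctly reduced to flatness and proposition \ref{ring epi}.

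However, the two steps you explicitly defer are exactly where the content of Stenstr\"{o}m's proof lives, so as written this is a plan rather than a proof. In the forward direction of the first equivalence, everything hinges on showing $I A_{\mathcal{F}} = A_{\mathcal{F}}$ for every $I \in \mathcal{F}$, i.e.\ that the given filter $\mathcal{F}$ coincides with the filter $\mathcal{F}_\lambda$ attached to the flat epimorphism $\lambda$: your torsion-freeness and divisibility arguments establish closedness of $M\otimes_A A_{\mathcal{F}}$ only with respect to $\mathcal{F}_\lambda$, and you also need the kernel and cokernel of $M \to M\otimes_A A_{\mathcal{F}}$ to be torsion for the \emph{original} $\mathcal{F}$ before uniqueness of the $\mathcal{F}$-closure can be invoked. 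You flag this identification as ``cleanest to do after'' the second part but give no argument for it, and it is not a formal consequence of ``flat plus epimorphism'' --- it is precisely the extra condition singled out in Stenstr\"{o}m's list of equivalent characterisations of perfectness. Likewise, the transitivity axiom for $\mathcal{F}_f$ is asserted to ``follow from flatness'' with no computation. Finally, mind the sides: for a Gabriel filter of right ideals and $B$ flat as a left $A$-module, the relevant functor is $(A/I)\otimes_A B$, not $B\otimes_A A/I$.
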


A localisation with respect to a Gabriel filter is said to be perfect if it yields a flat ring epimorphism. The following corollary establishes a first connection between universal localisations, localisations with respect to Gabriel filters and flat ring epimorphisms.

\begin{corollary}\label{uni+Gab}
If a universal localisation is a localisation with respect to a Gabriel filter then it is perfect, i.e., it yields a flat ring epimorphism.
\end{corollary}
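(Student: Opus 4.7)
The plan is to deduce the corollary directly from Theorem~\ref{Gabriel}. Let $f_\Sigma: A \to A_\Sigma$ be the given universal localisation, and suppose that, up to equivalence, $f_\Sigma$ coincides with the canonical homomorphism $A \to A_F$ of a localisation with respect to some Gabriel filter $F$; write $L: A\mbox{-}Mod \to A\mbox{-}Mod$ for the associated localisation functor, so that $L(A) \cong A_\Sigma$. By Theorem~\ref{Gabriel}, it suffices to produce a natural isomorphism $L \cong A_\Sigma \otimes_A -$, whereupon flatness of $f_\Sigma$ follows at once.

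To obtain such an isomorphism I would invoke the uniqueness of left adjoints. By Proposition~\ref{ring epi}, the functor $A_\Sigma \otimes_A -$ is left adjoint to the fully faithful restriction functor $f_{\Sigma *}$, whose essential image is the bireflective subcategory $\Xcal_{A_\Sigma}$ (Theorem~\ref{bireflective}); in other words, $A_\Sigma \otimes_A -$ realises the bireflection onto $\Xcal_{A_\Sigma}$. On the other hand, the standard theory of Gabriel filters (cf.~\cite{St}, Chapter~XI) presents $L$ as the composite of a left adjoint into the full subcategory of $F$-closed modules and the inclusion of the latter into $A\mbox{-}Mod$; and since, by hypothesis, the canonical map $A \to A_F = A_\Sigma$ is already a ring epimorphism, the closed modules coincide with $\Xcal_{A_\Sigma}$. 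Two left adjoints to the same functor being naturally isomorphic, one concludes that $L \cong A_\Sigma \otimes_A -$.

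The only point in the argument that requires genuine input is this last identification, namely that the $F$-closed subcategory equals $\Xcal_{A_\Sigma}$; this is a standard consequence of the fact that the canonical Gabriel localisation map is itself a ring epimorphism (which is automatic here, as it equals $f_\Sigma$). Once that is in hand, the rest of the proof is a formal application of Theorem~\ref{Gabriel}, which translates the natural equivalence $L \cong A_\Sigma \otimes_A -$ into flatness of the ring epimorphism $f_\Sigma$.
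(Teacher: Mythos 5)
There is a genuine gap, and it sits exactly where you flag the ``only point that requires genuine input''. You read the hypothesis as an identification of ring homomorphisms ($f_\Sigma$ equals $A\to A_F$ up to equivalence) and then try to upgrade this to an identification of localisation \emph{functors} by arguing that the subcategory of $F$-closed modules equals $\Xcal_{A_\Sigma}$, ``a standard consequence of the fact that the canonical Gabriel localisation map is a ring epimorphism''. That claim is false. What is true is that every $F$-closed module carries a canonical $A_F$-module structure, so the closed modules sit inside $\Xcal_{A_F}$; the reverse inclusion --- that every $A_F$-module is $F$-closed --- is precisely one of Stenstr\"om's equivalent characterisations of a \emph{perfect} Gabriel topology (\cite{St}, Proposition XI.3.4). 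So your key identification is not implied by epimorphy of $\psi\colon A\to A_F$; it is essentially the statement you are trying to prove. A concrete illustration: let $A=\Kbb[x,y]$ and let $F$ be the Gabriel filter of ideals containing a power of $\mathfrak{m}=(x,y)$. Then $A_F\cong A$ and $\psi$ is the identity, hence a (flat) ring epimorphism, but $A/\mathfrak{m}$ is an $A_F$-module which is $F$-torsion and therefore not $F$-closed; the closed modules form a proper subcategory of $\Xcal_{A_F}=A\mbox{-}Mod$ and the localisation functor $(-)_F$ is not naturally equivalent to $A_F\otimes_A-$. (The adjoint-uniqueness part of your argument is fine; it is the input to it that fails.)

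The paper avoids this entirely by reading the hypothesis more strongly, namely as an identification of \emph{localisations} rather than of ring maps: the localisation functor attached to a universal localisation is, by definition (see subsection 2.4), the functor $A_\Sigma\otimes_A-$, so if the universal localisation ``is'' a Gabriel localisation then the Gabriel localisation functor is naturally equivalent to tensoring with the localised ring, and the first half of Theorem~\ref{Gabriel} gives flatness immediately. Under your weaker, ring-map-only reading the corollary is not obviously true and certainly does not follow by the route you propose; to repair your argument you would need an independent reason --- presumably exploiting that $A_\Sigma$ is a universal localisation, not merely that $\psi$ is an epimorphism --- why every $A_F$-module is $F$-closed.
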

\begin{proof}
The localisation functor of a universal localisation is the tensor product with the localised ring. The result then follows from theorem \ref{Gabriel}.
\end{proof}

\section{A sufficient condition for universal localisation}

In this section we provide sufficient conditions on a ring epimorphism for it to be a universal localisation. Recall that a quasi-isomorphism is a morphism of complexes inducing isomorphisms in the cohomologies. 

\begin{proposition}\label{resolution}
Let $f:A\ra B$ be a ring epimorphism. The following are equivalent.
\begin{enumerate}
\item There is a quasi-isomorphism from $P_f$, a complex $\small{\xymatrix{P_f^{-1}\ar[r]^g& P_f^0}}$ of projective left $A$-modules, to $K_f$;
\item $B$ is a left $A$-module of projective dimension less or equal than one.
\end{enumerate}
Moreover, if these conditions hold, $B$ is finitely presented if and only if $P_f$ can be chosen as a complex of finitely generated projective left $A$-modules.

\end{proposition}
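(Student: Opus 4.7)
The proof proceeds by explicit constructions in both directions, using that $K_f$ is the two-term complex $A\xrightarrow{f} B$ concentrated in degrees $-1$ and $0$, with $H^{-1}(K_f)=ker(f)$ and $H^0(K_f)=coker(f)$.

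For $(1)\Rightarrow(2)$, I would take a chain map $\alpha:P_f\to K_f$ with components $\alpha^{-1}:P_f^{-1}\to A$ and $\alpha^0:P_f^0\to B$ satisfying $f\alpha^{-1}=\alpha^0 g$, and extract from its mapping cone the sequence
\[
0\longrightarrow P_f^{-1}\xrightarrow{\,(\alpha^{-1},\,g)\,} A\oplus P_f^0\xrightarrow{\,(f,\,-\alpha^0)\,} B\longrightarrow 0.
\]
Exactness of this sequence provides a projective resolution of $B$ of length at most one. Injectivity on the left follows from $\alpha^{-1}$ restricting to an isomorphism $ker(g)\xrightarrow{\sim}ker(f)$. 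Surjectivity on the right and the equality of $ker(f,-\alpha^0)$ with $\Img(\alpha^{-1},g)$ both follow from $\alpha^0$ inducing an isomorphism $coker(g)\xrightarrow{\sim}coker(f)$, combined with the commutativity $f\alpha^{-1}=\alpha^0 g$. This is the part of the argument that requires most care, but it is a routine diagram chase.

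For $(2)\Rightarrow(1)$, I would start with a projective resolution $0\to Q^{-1}\to Q^0\xrightarrow{\pi} B\to 0$ and form the pullback $E:=A\times_B Q^0$ along $f$ and $\pi$. Standard pullback analysis yields a short exact sequence $0\to Q^{-1}\to E\xrightarrow{p_1} A\to 0$, which splits because $A$ is projective; hence $E\cong A\oplus Q^{-1}$ is projective. Setting $P_f:=(E\xrightarrow{p_2} Q^0)$, the maps $p_1$ and $\pi$ assemble into a chain map $P_f\to K_f$, the commutativity $fp_1=\pi p_2$ being the pullback relation. The quasi-isomorphism check is immediate: $ker(p_2)$ identifies with $ker(f)\subseteq A$ via $p_1$, and on cokernels the induced map $Q^0/\Img(p_2)\to B/f(A)$ is surjective because $\pi$ is, and injective because any $q\in Q^0$ with $\pi(q)\in f(A)$ lies in $\Img(p_2)$ by the universal property of the pullback.

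The moreover statement follows from the same constructions. If $P_f^{-1}$ and $P_f^0$ are finitely generated projective, then the middle term $A\oplus P_f^0$ of the resolution built in $(1)\Rightarrow(2)$ is finitely generated projective, so $B$ is finitely presented. Conversely, a finitely presented module of projective dimension at most one admits a resolution with $Q^{-1}, Q^0$ finitely generated projective (by Schanuel's lemma applied to any fp presentation of $B$), and then $E\cong A\oplus Q^{-1}$ and $Q^0$ are both finitely generated projective.
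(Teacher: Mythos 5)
Your proof is correct. The direction $(1)\Rightarrow(2)$ is identical to the paper's: both extract from the (mapping cone of the) quasi-isomorphism the short exact sequence $0\to P_f^{-1}\to A\oplus P_f^0\to B\to 0$ and verify exactness by a diagram chase. For $(2)\Rightarrow(1)$ you diverge in presentation: the paper builds a Cartan--Eilenberg resolution of $K_f$ and cites Weibel for the quasi-isomorphism from its total complex $A\oplus P_1^B\xrightarrow{\hat f+h}P_0^B$, whereas you form the pullback $E=A\times_B Q^0$, split off $A$ to see $E\cong A\oplus Q^{-1}$, and check the quasi-isomorphism on $H^{-1}$ and $H^0$ by hand. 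Choosing a splitting of $E\to A$ shows your complex is literally the paper's total complex, so the constructions coincide; what your route buys is self-containedness (no appeal to the machinery of Cartan--Eilenberg resolutions, and the cohomology computation is elementary from the universal property of the pullback). You also treat the ``moreover'' clause more explicitly than the paper does, correctly invoking Schanuel's lemma to upgrade a finite presentation of $B$ to a length-one resolution by finitely generated projectives, from which $E$ and $Q^0$ are finitely generated projective.
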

\begin{proof}
(1) $\Rightarrow$ (2) Suppose we have a quasi-isomorphism as in the diagram
\begin{equation}\label{two rows}
\xymatrix{0\ar[r]& ker(g)\ar[d]^k_\cong\ar[r]^{k_1}&P_f^{-1}\ar[d]^{\pi_2}\ar[r]^{g}&P_f^0\ar[d]^{\pi_1}\ar[r]^{c_1\ \ \ \ \ }& coker(g)\ar[d]^c_\cong\ar[r]&0\\ 0\ar[r]& ker(f)\ar[r]^{k_2}& A\ar[r]^f&B \ar[r]^{c_2\ \ \ \ \ }& coker(f)\ar[r]&0.}
\end{equation}
Define a complex as follows:
\[\xymatrix{0\ar[r]&P_f^{-1}\ar[r]^{p_1\ \ }&A \oplus  P_f^0\ar[r]^{\ \ \ p_2}& B\ar[r]&0,}\]
\[\begin{array}{cc}p_1:P_f^{-1}\longrightarrow A\oplus P_f^0 & p_2:A\oplus P_f^0\longrightarrow B\\ x\mapsto (\pi_2(x), g(x)) &\ \ \ \ \ \ \ \ \ (y,z)\mapsto f(y)-\pi_1(z).\end{array}\]
It is easy to check, by diagram chasing in (\ref{two rows}), that this is a short exact sequence. Hence, $B$ has projective dimension less or equal than one.

(2) $\Rightarrow$ (1): Choose a projective resolution of $B$ of shortest length
\[\xymatrix{0\ar[r]&P_1^B\ar[r]^h&P_0^B\ar[r]^\pi&B\ar[r]&0}\]
and consider a Cartan-Eilenberg resolution of $K_f$ given by
\[ \xymatrix{& P_1^B\ar[d]^{h} \\ A\ar[r]^{\hat{f}}\ar@{.>}[d]_{id}&P_0^B\ar@{.>}[d]^\pi\\ A\ar[r]^f& B}\]
It is well-known (see \cite{W}, \S5.7) that there is a quasi-isomorphism from its total complex
\[\xymatrix{A\oplus P_1^B\ar[rr]^{\hat{f}+h}&& P_0^B}\]
to $K_f$, thus finishing the proof.
\end{proof}

\begin{remark}
This proposition can be easily generalised to $B$ of any finite projective dimension. Since our focus is on 1-finite ring epimorphisms, it is convenient to keep the statement and proof as above.
\end{remark}

The following theorem shows that certain homological ring epimorphisms can be characterised as universal localisations.

\begin{theorem}\label{Main} 
Let $f: A\ra B$ be a 1-finite ring epimorphism. Then $f$ is homological if and only if it is a universal localisation.
\end{theorem}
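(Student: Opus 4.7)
The backward implication is quick: any universal localisation $f_\Sigma : A \to A_\Sigma$ satisfies $\text{Tor}_1^A(A_\Sigma, A_\Sigma) = 0$ (Schofield's result, quoted before Example \ref{ex1}), and once $B$ has projective dimension at most one the higher Tors vanish automatically. Hence $f$ is homological.

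For the forward implication, the plan is to exhibit $f$ as equivalent to a specific universal localisation. By Proposition \ref{resolution}, the 1-finite hypothesis provides a quasi-isomorphism from a two-term complex $P_f : P_f^{-1} \xrightarrow{g} P_f^0$ of finitely generated projective left $A$-modules to $K_f$. Set $\Sigma := \{g\}$ and let $f_\Sigma : A \to A_\Sigma$ be the universal localisation at $\Sigma$. The basic observation is that for any ring homomorphism $A \to R$, since the terms of $P_f$ are projective,
$$R \otimes_A^{\Lbb} K_f \;\simeq\; R \otimes_A^{\Lbb} P_f \;=\; R \otimes_A P_f,$$
so $R \otimes_A^{\Lbb} K_f$ vanishes precisely when $R \otimes_A g$ is an isomorphism. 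Specialising to $R = B$, the homological assumption together with Proposition \ref{hom ring epi}(4) give $B \otimes_A^{\Lbb} K_f = 0$, hence $B \otimes_A g$ is an isomorphism, and the universal property of $f_\Sigma$ produces a unique ring homomorphism $h : A_\Sigma \to B$ with $f = h \circ f_\Sigma$. Specialising instead to $R = A_\Sigma$, the defining property of the localisation makes $A_\Sigma \otimes_A g$ an isomorphism, so $A_\Sigma \otimes_A^{\Lbb} K_f = 0$ as well.

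It remains to show that $h$ is an isomorphism. Applying $A_\Sigma \otimes_A^{\Lbb} -$ to the triangle $A \xrightarrow{f} B \to K_f \to A[1]$ and using the vanishing just obtained yields that $A_\Sigma \otimes_A f$ is an isomorphism. Tensoring the factorisation $f = h \circ f_\Sigma$ and recalling that $A_\Sigma \otimes_A f_\Sigma$ is an isomorphism by Proposition \ref{ring epi}(3) forces $A_\Sigma \otimes_A h$ to be an isomorphism. Since $f_\Sigma$ is a ring epimorphism and $B$ acquires an $A_\Sigma$-module structure through $h$, Proposition \ref{ring epi}(2) makes the adjunction counits $A_\Sigma \otimes_A A_\Sigma \to A_\Sigma$ and $A_\Sigma \otimes_A B \to B$ isomorphisms; a direct chase through these identifies $h$ with $A_\Sigma \otimes_A h$, so $h$ is an isomorphism and $f$ is equivalent to $f_\Sigma$. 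The delicate point of the whole argument is the very first identification $R \otimes_A^{\Lbb} K_f \simeq R \otimes_A P_f$: this is where the 1-finite assumption is crucially used, and it is what allows the abstract homological vanishing to translate into an invertibility statement capable of triggering the universal property of $f_\Sigma$.
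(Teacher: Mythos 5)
Your proof is correct, and its first half coincides with the paper's: the backward direction via Schofield's $Tor_1$-vanishing plus the bound on projective dimension, the choice of $g$ via Proposition \ref{resolution}, the identification $B\otimes_A^{\Lbb}K_f\cong B\otimes_A P_f$ showing that $B\otimes_A g$ is invertible, and the resulting factorisation $f=h\circ f_{\Sigma}$ through $A_{\{g\}}$ are all exactly the paper's steps. Where you genuinely diverge is in proving that $h$ is an isomorphism. The paper stays at the level of modules: it runs an explicit diagram chase through the sequences $0\to ker(f)\to A\to f(A)\to 0$ and $0\to f(A)\to B\to coker(f)\to 0$, computes $Tor_1^A(A_{\{g\}},coker(f))$ from a projective resolution extending $g$, concludes that $A_{\{g\}}\otimes_A f$ is an isomorphism of left \emph{and right} $A$-modules, and then uses the induced right $B$-module structure on $A_{\{g\}}$ together with the bijection of Theorem \ref{bireflective} to force $\Xcal_{A_{\{g\}}}=\Xcal_B$. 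You instead note that the defining property of the localisation gives $A_{\Sigma}\otimes_A^{\Lbb}K_f\cong A_{\Sigma}\otimes_A P_f=0$, tensor the triangle $A\to B\to K_f\to A[1]$ to get that $A_{\Sigma}\otimes_A f$ is invertible in one stroke, cancel $A_{\Sigma}\otimes_A f_{\Sigma}$ to see that $A_{\Sigma}\otimes_A h$ is invertible, and finish by naturality of the adjunction unit $\psi$ (which is an isomorphism on $A_{\Sigma}$-modules, and $B$ is one via $h$). This is a genuine simplification: the single triangle argument subsumes the paper's entire $Tor$ computation, and the unit argument avoids passing to right modules and bireflective subcategories altogether. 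Both proofs rest on the same two pillars --- 1-finiteness to replace $K_f$ by a two-term complex of finitely generated projectives, and the universal property of $A_{\{g\}}$ to produce $h$ --- but yours packages the remaining verification more efficiently, at the cost of leaning a little more on derived-category machinery where the paper is deliberately elementary.
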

\begin{proof}
Suppose that $f$ is a universal localisation. Then $Tor_1^A(B,B)=0$ and, since $B$ is a left $A$-module of projective dimension less or equal than one, $f$ is homological.

Conversely, let $P_f$ be a complex $\xymatrix{P^{-1}_f\ar[r]^g&P^0_f}$ of finitely generated projective left $A$-modules quasi-isomorphic to $K_f$, which exists by proposition \ref{resolution}. Since $f$ is homological, by proposition \ref{hom ring epi}, we have
$$0=B\otimes_A^\Lbb K_f\cong B\otimes_A^\Lbb P_f=B\otimes_A P_f$$ in $\Dcal(A)$, showing that $B\otimes_A g$ is an isomorphism of left $A$-modules. Therefore, by theorem \ref{uni loc}, there is a commutative diagram of ring epimorphisms
\[\xymatrix{A\ar[rr]^f\ar[rd]_{f_g}&&B\\ & A_{\{g\}}\ar[ru]_{h}}\]
showing that, in particular, the essential images of the corresponding restriction functors for right modules satisfy, by proposition \ref{ring epi},
$$\Xcal_B\subseteq  \Xcal_{A_{\{g\}}} \subseteq Mod\mbox{-}A.$$
In order to prove the reverse inclusion, we will see that $A_{\{g\}}\otimes_Af$ is an isomorphism of left (and right) $A$-modules.
To do so, consider the short exact sequence 
\begin{equation}\label{seq 1 ker g}
\xymatrix{0\ar[r]& ker(g)\ar[r]& P_f^{-1}\ar[r]^{\bar{g}\ \ }& g(P_f^{-1})\ar[r]&0}
\end{equation}
induced by the map $g$. Observe that a similar argument to the one in the proof of corollary \ref{ker and coker}(1) shows that $A_{\{g\}}\otimes_A \bar{g}$ is an isomorphism. 
Using the commutative diagram (\ref{two rows}) given by the quasi-isomorphism from $P_f$ to $K_f$ and applying the functor $A_{\{g\}}\otimes_A-$ to the short exact sequences (\ref{seq 1 ker g}) and (\ref{seq 1 ker}) we get the following diagram of left $A$-modules
\[\xymatrix{A_{\{g\}}\otimes_Aker(g)\ar[r]^0\ar[d]^{A_{\{g\}}\otimes_A k}&A_{\{g\}}\otimes_A P_f^{-1}\ar[r]^{\cong\ \ \ }\ar[d]& A_{\{g\}}\otimes_Ag(P_f^{-1})\ar[r]\ar[d]&0\\ A_{\{g\}}\otimes_Aker(f)\ar[r]^{\ \ \ \  A_{\{g\}}\otimes_A k_1}&A_{\{g\}}\otimes_A A\ar[r]^{A_{\{g\}}\otimes_A\bar{f}\ \ }&A_{\{g\}}\otimes_Af(A)\ar[r]&0.}\]
It shows that, since $A_{\{g\}}\otimes_A k$ is an isomorphism, $A_{\{g\}}\otimes_A k_1=0$ and thus $A_{\{g\}}\otimes_A \bar{f}$ is an isomorphism. Now, applying the functor $A_{\{g\}}\otimes_A-$ to the sequence (\ref{seq 2 coker}), we get 
\[\xymatrix{Tor_1^A(A_{\{g\}}, coker(f))\ar[r]& A_{\{g\}}\otimes_A f(A)\ar[r]& A_{\{g\}}\otimes_A B\ar[r]&0.}\]
In order to compute $Tor_1^A(A_{\{g\}}, coker(f))$, consider a projective resolution of $coker(f)$ of the form
\[\xymatrix{...\ar[r] & P^{-2}\ar[r]^d&P_f^{-1}\ar[r]^g&P_f^0\ar[r]& coker(f)\ar[r]&0}\]
and apply to it the functor $A_{\{g\}}\otimes_A-$. By definition, $A_{\{g\}}\otimes_A g$ is an isomorphism and, therefore, the first cohomology of the new complex is zero. This shows precisely that $Tor_1^A(A_{\{g\}}, coker(f))=0$ and, thus, using the epi-mono factorisation of $f$, we can conclude that  
\[A_{\{g\}}\otimes_A f:A_{\{g\}}\otimes_A A\ra A_{\{g\}}\otimes_A B\] is an isomorphism of left $A$-modules. It is, however, easy to check that this is also an isomorphism of right $A$-modules. Hence, $A_{\{g\}}$ has a natural right $B$-module structure, i.e, it lies in $\Xcal_B$. Since $A_{\{g\}}$ is a generator of $\Xcal_{A_{\{g\}}}$, this shows that $\Xcal_{A_{\{g\}}}\subseteq \Xcal_B$ and, thus, $\Xcal_{A_{\{g\}}}=\Xcal_B$. By proposition \ref{ring epi}, this means that $A_{\{g\}}$ and $B$ lie in the same epiclass of $A$ and, therefore, are isomorphic.
\end{proof}

\begin{remark}
As mentioned in the introduction, theorem \ref{Main} can be derived from independent current work of Chen and Xi by observing that, under our assumptions, the generalised localisation in \cite{CX3} (corollary 3.7) is a universal localisation.
\end{remark}

\begin{remark}
Note that, for a homological 1-finite ring epimorphism $f:A\rightarrow B$, the above proof together with the proof of proposition \ref{resolution} explicitly constructs a map g in $A\mbox{-}proj$ with $B\cong A_{\{g\}}$. Indeed, $g$ depends only on the choice of a projective resolution of B of shortest length in $A\mbox{-}mod$.
\end{remark}

In particular, for finite ring epimorphisms, we have the following result. 

\begin{corollary}\label{cor finite}
Let $f:A\rightarrow B$ be a finite ring epimorphism. Then $B$ lies in the same epiclass of $A$ as the universal localisation $A_{\left\{f\right\}}$, where $f$ is seen as an element of $A\mbox{-}proj$.
\end{corollary}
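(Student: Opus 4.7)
The plan is to reduce this corollary to Theorem \ref{Main} and then unwind the construction in its proof in the special case when $B$ is finitely generated projective. First I would check that a finite ring epimorphism is automatically 1-finite and homological. Since $B$ is a finitely generated projective left $A$-module, it is in particular finitely presented of projective dimension zero, so at most one, giving the 1-finiteness. Projectivity of $B$ over $A$ also forces $Tor_i^A(B,B)=0$ for all $i>0$, so $f$ is homological. By Theorem \ref{Main}, there then exists some map $g$ between finitely generated projective left $A$-modules for which $B$ lies in the same epiclass of $A$ as $A_{\{g\}}$.

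The remaining task is to identify this map $g$ with $f$ itself. For this I would revisit the construction in Proposition \ref{resolution} (followed by the proof of Theorem \ref{Main}) applied to our $f$. There one starts from a projective resolution of $B$ of shortest length and builds the total complex of a Cartan--Eilenberg resolution of $K_f$. Since $B$ is already projective as a left $A$-module, the shortest resolution is concentrated in degree zero, i.e.\ $P_1^B=0$ and $P_0^B=B$, and the total complex collapses to the two-term complex
\[\xymatrix{A\ar[r]^f & B},\]
both entries of which now lie in $A\text{-}\mathrm{proj}$. Thus the map $g$ produced in the proof of Theorem \ref{Main} is precisely $f$ itself, seen as a morphism in $A\text{-}\mathrm{proj}$, and hence $B\cong A_{\{f\}}$ in the epiclass of $A$.

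There is no substantial obstacle here: the only subtlety is checking that Proposition \ref{resolution} together with the recipe highlighted in the remark after Theorem \ref{Main} genuinely delivers $f$ itself (and not some other quasi-isomorphic representative), which is immediate once one observes that a projective module has a length-zero projective resolution.
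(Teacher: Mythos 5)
Your proof is correct and follows exactly the route the paper intends: the paper states this corollary without a separate proof, relying on the remark preceding it (that the map $g$ in Theorem \ref{Main} comes from a shortest projective resolution of $B$), and your observation that for projective $B$ this resolution is concentrated in degree zero, so the constructed complex collapses to $A\xrightarrow{f}B$, is precisely the intended argument. The preliminary checks that a finite ring epimorphism is 1-finite and homological are also correct and necessary.
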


With further assumptions on the ring epimorphism $f$, the universal localisation in theorem \ref{Main} takes a particularly nice form.

\begin{corollary}\label{inj surj}
Let $f:A\rightarrow B$ be a homological 1-finite ring epimorphism. The following holds.
\begin{enumerate}
\item If $f$ is injective then $coker(f)=B/A$ is a finitely presented $A$-module of projective dimension less or equal than one and $B$ and $A_{\{B/A\}}$ lie in the same epiclass of $A$. 
\item If $f$ is surjective then $ker(f)$ is a finitely presented projective $A$-module and $B$ and $A_{\{ker(f)\}}$ lie in the same epiclass of $A$.
\end{enumerate}
Moreover, if $A$ is a finite dimensional $\mathbb{K}$-algebra and $f$ is surjective then $B$ and $A/AeA$ lie in the same epiclass of $A$, for some idempotent $e$ in $A$.
\end{corollary}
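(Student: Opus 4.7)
The overall strategy is to apply Theorem \ref{Main} case by case, selecting a particularly convenient two-term complex of finitely generated projective $A$-modules quasi-isomorphic to $K_f$. A useful preliminary observation is that the proof of Theorem \ref{Main}, once set up via the generic quasi-isomorphism diagram (\ref{two rows}), is indifferent to the specific Cartan--Eilenberg construction of such a complex in Proposition \ref{resolution} and applies to any admissible choice.

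For (1), the short exact sequence $0\to A\to B\to B/A\to 0$ combined with $A$ being finitely generated projective and $B$ being 1-finite yields, through standard finite-presentation and Ext arguments, that $B/A$ is finitely presented of projective dimension at most one. Since $f$ is injective, $K_f$ is quasi-isomorphic to $B/A$ concentrated in degree zero, so a finitely generated projective resolution $\sigma_{B/A}:P^{-1}\to P^0$ of $B/A$ lifts, via projectivity of $P^0$, to a quasi-isomorphism from the complex $P^{-1}\to P^0$ to $K_f$. Theorem \ref{Main} then identifies $B$ with $A_{\{\sigma_{B/A}\}}=A_{\{B/A\}}$ up to epiclass.

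For (2), I would first show that $ker(f)$ is finitely generated projective: picking a projective resolution $0\to P_1\to P_0\to B\to 0$ with $P_0,P_1$ finitely generated projective and comparing with $0\to ker(f)\to A\to B\to 0$ via Schanuel's lemma gives $ker(f)\oplus P_0\cong P_1\oplus A$, hence $ker(f)$ is a direct summand of a finitely generated projective module. Since $f$ is surjective, $K_f$ is quasi-isomorphic to $ker(f)[1]$, so the two-term complex $ker(f)\xrightarrow{0} 0$ (with $ker(f)$ in degree $-1$) is an admissible choice. Applying Theorem \ref{Main} to this complex identifies $B$ with the universal localisation at the zero map $ker(f)\to 0$, which coincides with $A_{\{ker(f)\}}$: both are characterised by the universal property that tensoring with $ker(f)$ gives zero, since the projective resolution of the projective module $ker(f)$ appearing in the definition of $A_{\{ker(f)\}}$ is precisely the map $0\to ker(f)$.

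For the finite dimensional case, Krull--Schmidt produces an idempotent $e\in A$ with $ker(f)\cong Ae$ as left $A$-modules, so $A_{\{ker(f)\}}=A_{\{Ae\}}$. I would identify this with $A/AeA$ by matching universal properties: both are universal among ring epimorphisms $A\to R$ sending $e$ to zero. For $A/AeA$ this follows because any ring homomorphism killing $e$ has kernel a two-sided ideal, which therefore contains $AeA$; for $A_{\{Ae\}}$ it follows from $R\otimes_A Ae\cong Re$. The main obstacle is the preliminary observation that Theorem \ref{Main}'s argument goes through for an arbitrary two-term projective complex quasi-isomorphic to $K_f$; once this is granted, the three assertions follow with only routine bookkeeping.
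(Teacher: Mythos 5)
Your parts (1) and (2) are correct and essentially the paper's own argument: the paper likewise applies Theorem \ref{Main} to a two-term complex $P_f^{-1}\xrightarrow{g}P_f^0$ of finitely generated projectives quasi-isomorphic to $K_f$, reading off that $g$ is injective with cokernel $B/A$ (resp.\ split surjective with kernel $ker(f)$) from the injectivity (resp.\ surjectivity) of $f$. Your ``preliminary observation'' is not really an obstacle: the proof of Theorem \ref{Main} only uses the generic quasi-isomorphism diagram (\ref{two rows}) and the finite generation and projectivity of the terms, so it applies to any admissible complex, and the paper's own proof of the corollary already relies on this.

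The ``moreover'' part, however, contains a genuine error. You claim that Krull--Schmidt produces an idempotent $e\in A$ with $ker(f)\cong Ae$ as left $A$-modules. A finitely generated projective left module decomposes as $\bigoplus_i(Ae_i)^{\oplus n_i}$ with the $e_i$ primitive idempotents, but nothing bounds the multiplicities $n_i$ by those occurring in ${}_AA$, and the sequence $0\to ker(f)\to A\to B\to 0$ need not split, so $ker(f)$ need not be a direct summand of ${}_AA$. Concretely, let $A$ be the algebra of upper triangular $2\times 2$ matrices over $\Kbb$ and $f:A\to A/Ae_1A\cong\Kbb$; this is a surjective homological $1$-finite ring epimorphism with $ker(f)=Ae_1A\cong (Ae_1)^{\oplus 2}$, which is not isomorphic to $Ae'$ for any idempotent $e'$ of $A$ (up to isomorphism the modules $Ae'$ are $0$, $Ae_1$, $Ae_2$ and $A$, and the only two-dimensional one, $Ae_2$, is indecomposable while $(Ae_1)^{\oplus 2}$ is not). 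The paper avoids this entirely: it observes that $ker(f)/ker(f)^2=Tor_1^A(B,B)=0$, so $ker(f)$ is an idempotent two-sided ideal, and idempotent ideals of finite dimensional algebras are generated by idempotent elements. Your route can be repaired without that fact: since $ker(f)$ is finitely generated projective, $\mathrm{add}(ker(f))=\mathrm{add}(Ae)$ for $e$ the sum of one primitive idempotent from each isomorphism class occurring in $ker(f)$, and a ring homomorphism $g:A\to R$ annihilates $ker(f)$ under $R\otimes_A-$ if and only if it annihilates $Ae$, i.e.\ if and only if $g(e)=0$; hence $A_{\{ker(f)\}}\cong A_{\{Ae\}}\cong A/AeA$, the last identification by the universal-property comparison you give, which is correct. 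As written, though, the intermediate claim is false and the step needs this replacement.
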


\begin{proof}
Let $P_f$ be a complex $\xymatrix{P^{-1}_f\ar[r]^g&P^0_f}$ of finitely generated projective left $A$-modules quasi-isomorphic to $K_f$, which exists by proposition \ref{resolution}.
\begin{enumerate}
\item Since $f$ is injective, $g$ is injective and $coker(f)\cong coker(g)$ is a finitely presented A-module of projective dimension less or equal than one. By theorem \ref{Main}, it follows that $B$ lies in the same epiclass of $A$ as $A_{\{g\}}=A_{\{coker(f)\}}$.

\item Since $f$ is surjective, $g$ is surjective and thus a split map. It follows that $ker(f)\cong ker(g)$ is a finitely presented projective A-module. Again, by theorem \ref{Main}, we get that $B$ lies in the same epiclass of $A$ as $A_{\{g\}}$, which is easily seen to be the universal localisation $A_{\{0\rightarrow ker(f)\}}=A_{\{ker(f)\}}$.
\end{enumerate}

Note that, if $f$ is surjective then $ker(f)$ is an idempotent ideal of $A$, since we have $$0=Tor^A_1(B,B)=Tor_A^1(A/ker(f),A/ker(f))=ker(f)/ker(f)^2.$$ Thus, if $A$ is a finite dimensional $\Kbb$-algebra then $ker(f)$ is generated by an idempotent $e$ in $A$.
\end{proof}

As a consequence of theorem \ref{Main} we can also establish a comparison between universal localisations and localisations with respect to Gabriel filters, motivated by the results in \cite{LA}.

\begin{corollary}
Let $A$ be a perfect ring and $f:A\ra B$ a ring epimorphism. Then $f$ is a universal localisation and a localisation with respect to a Gabriel filter if and only if $f$ is flat.
\end{corollary}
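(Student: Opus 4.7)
The proof is essentially a synthesis of three earlier results, with the hypothesis that $A$ is perfect serving only to upgrade flatness to finiteness. The plan is to handle each implication separately, reducing to statements already proved.

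For the easier implication, assume $f:A\to B$ is simultaneously a universal localisation and a localisation with respect to a Gabriel filter. Then Corollary \ref{uni+Gab} applies directly and yields that $f$ is perfect, i.e., $f$ is flat. Note that this direction does not require $A$ to be perfect.

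For the converse, assume $f:A\to B$ is flat. I need to produce the two characterisations. The Gabriel-filter part is immediate from Theorem \ref{Gabriel}: any flat ring epimorphism lies in the same epiclass as the localisation of $A$ with respect to some Gabriel filter of right ideals. For the universal-localisation part, this is where perfectness of $A$ enters. By Proposition \ref{cun}, a flat ring epimorphism over a perfect ring is automatically finite, so $B$ is a finitely generated projective left $A$-module. Consequently $f$ is $1$-finite (in fact $B$ has projective dimension zero) and $Tor_i^A(B,B)=0$ for all $i>0$ since $B$ is flat, which means $f$ is a homological ring epimorphism. Theorem \ref{Main} then forces $f$ to be a universal localisation. (One could alternatively invoke Corollary \ref{cor finite} directly, which asserts that any finite ring epimorphism $f$ already lies in the epiclass of $A_{\{f\}}$.)

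There is no real obstacle here — the proof is a short assembly of Corollary \ref{uni+Gab}, Theorem \ref{Gabriel}, Proposition \ref{cun}, and Theorem \ref{Main}. The only point worth flagging is the role of the perfectness hypothesis: it is needed precisely to apply Proposition \ref{cun} in order to pass from flatness to $1$-finiteness, which is what activates Theorem \ref{Main}. Without the perfectness assumption, flatness of $B$ would not suffice to place us in the scope of Theorem \ref{Main}.
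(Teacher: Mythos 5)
Your proposal is correct and follows essentially the same route as the paper: Corollary \ref{uni+Gab} for the forward direction, and Theorem \ref{Gabriel} plus Proposition \ref{cun} for the converse, the only cosmetic difference being that the paper concludes via Corollary \ref{cor finite} (which you correctly note as an alternative) rather than by verifying the hypotheses of Theorem \ref{Main} directly. Your remark on the role of the perfectness hypothesis matches the paper's use of it exactly.
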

\begin{proof}
If $f$ is both a universal localisation and a localisation with respect to a Gabriel filter, it is flat by corollary \ref{uni+Gab}. Conversely, if $f$ is flat, it is a localisation with respect to a Gabriel filter by theorem \ref{Gabriel}. By proposition \ref{cun}, since $A$ is perfect, $f$ is finite and, thus, a universal localisation by corollary \ref{cor finite}.
\end{proof}

\section{Recollements of derived module categories}

We will now use homological 1-finite ring epimorphisms to construct recollements of derived module categories.
For two left $A$-modules $M,N$ we denote by $\tau_M(N)$ the trace of $M$ in $N$, i.e., the submodule of $N$ given by the sum of the images of all $A$-homomorphisms from $M$ to $N$.
\begin{theorem}\label{Main 2}
Let $f:A\rightarrow B$ be a homological 1-finite ring epimorphism with 
$Hom_A(coker(f), ker(f))=0.$ Then the derived restriction functor $f_*$ induces a recollement of derived module categories 
\begin{equation}\nonumber
\begin{xymatrix}{\mathcal{D}(B)\ar[r]^{}&\mathcal{D}(A)\ar@<1.5ex>[l]_{}\ar@<-1.5ex>[l]_{}\ar[r]^{}&
\mathcal{D}(End_{\Dcal(A)}(K_f)).\ar@<1.5ex>_{}[l]\ar@<-1.5ex>_{}[l]}
\end{xymatrix}
\end{equation}
Moreover, if $f$ is finite then there is an isomorphism of rings $End_{\Dcal(A)}(K_f)\cong A/\tau_B(A)$.
\end{theorem}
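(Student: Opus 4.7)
The plan is to bootstrap the recollement already supplied by Theorem \ref{rec hom}. Since $f$ is homological we have
\[\Dcal(B)\to\Dcal(A)\to\tria(K_f),\]
and it remains to identify $\tria(K_f)$ with $\Dcal(\End_{\Dcal(A)}(K_f))$. The standard principle (due to Keller) asserts that once $K_f$ is a compact object of $\Dcal(A)$ with $\Hom_{\Dcal(A)}(K_f, K_f[n]) = 0$ for all $n\ne 0$, the functor $\mathbf{R}\Hom_A(K_f,-)$ restricts to a triangle equivalence $\tria(K_f)\simeq \Dcal(\End_{\Dcal(A)}(K_f))$. Compactness is immediate from Proposition \ref{resolution}: by 1-finiteness, $K_f$ is quasi-isomorphic to a bounded complex of finitely generated projective left $A$-modules.

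The crux is the vanishing of $\Hom_{\Dcal(A)}(K_f, K_f[n])$ for $n\ne 0$. Applying $\Hom_{\Dcal(A)}(-, B[n])$ to the triangle $A\xrightarrow{f} B\to K_f\to A[1]$, fully faithfulness of derived restriction forces $\Hom_{\Dcal(A)}(B,B[n])$ to vanish for $n\ne 0$, while for $n = 0$ the induced precomposition $\Hom_A(B,B)\to\Hom_A(A,B)$ is the identity on $B$; consequently $\Hom_{\Dcal(A)}(K_f, B[n]) = 0$ for every $n$. Applying $\Hom_{\Dcal(A)}(K_f, -)$ to the same triangle and using this, one finds $\Hom_{\Dcal(A)}(K_f, K_f[n]) \cong \Hom_{\Dcal(A)}(K_f, A[n+1])$, and a short calculation using $\mathrm{pd}_A B\le 1$ disposes of every case except $n = -1$, in which
\[\Hom_{\Dcal(A)}(K_f, A) = ker\bigl(\Hom_A(B,A)\xrightarrow{-\circ f}\Hom_A(A,A)\bigr) = \Hom_A(coker(f), A).\]
Since $B\otimes_A coker(f) = 0$ and $B\in\Xcal_B$, the adjunction of Proposition \ref{ring epi} gives $\Hom_A(coker(f), B) = 0$, so applying $\Hom_A(coker(f),-)$ to the triangle forces $f\circ\varphi = 0$ for every $\varphi\colon coker(f)\to A$: every such $\varphi$ factors through $ker(f)$. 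Thus $\Hom_A(coker(f), A) = \Hom_A(coker(f), ker(f)) = 0$ by hypothesis.

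For the moreover clause, finiteness of $f$ makes $K_f = (A\xrightarrow{f} B)$ a two-term complex of finitely generated projective left $A$-modules, so $\End_{\Dcal(A)}(K_f)$ coincides with the homotopy endomorphism ring. I would exhibit every chain endomorphism in the form $(r_a, r_{f(a)})$ for some $a\in A$: a left $A$-linear $u\colon A\to A$ is right multiplication $r_a$ by $a = u(1)$, while $\Hom_A(B, B)\cong B$ via $v\mapsto v(1_B)$ (using that $f$ is a ring epimorphism), and the compatibility $vf = fu$ forces $v = r_{f(a)}$. A null-homotopy is then a left $A$-linear $h\colon B\to A$ with $hf = r_a$ and $fh = r_{f(a)}$; the first identity is equivalent to $h(1_B) = a$, while the second becomes automatic, since any left $A$-linear composition $B\to A\to B$ is right multiplication by its value at $1_B$. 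The set $\{h(1_B) : h\in\Hom_A(B,A)\}$ is closed under sums and contains each $h'(b) = (h'\circ r_b)(1_B)$, hence equals $\tau_B(A)$, and the assignment $a\mapsto(r_a, r_{f(a)})$ descends to the desired ring isomorphism.

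The step I expect to be the main obstacle is the identification $\Hom_A(coker(f), A) = \Hom_A(coker(f), ker(f))$ in the main vanishing; without the observation that $B\otimes_A coker(f) = 0$ together with $B\in\Xcal_B$ kills $\Hom_A(coker(f), B)$, there is no obvious route from the stated hypothesis to the self-$\Hom$ of $K_f$. The rest is formal manipulation of the triangle $A\to B\to K_f$ and bookkeeping for the endomorphism ring.
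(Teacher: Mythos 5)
Your argument is correct, and the overall skeleton matches the paper's: start from the recollement of theorem \ref{rec hom}, get compactness of $K_f$ from proposition \ref{resolution}, verify exceptionality, invoke Keller's theorem, and compute $\End_{\Dcal(A)}(K_f)$ for finite $f$ via null-homotopies $h\colon B\to A$, identifying $ker(\Omega)$ with $\tau_B(A)$ exactly as the paper does. Where you genuinely diverge is in the proof that $\Hom_{\Dcal(A)}(K_f,K_f[n])=0$ for $n\neq 0$. The paper computes in the homotopy category on the two-term projective complex $P_f$: the cases $|n|\ge 2$ are trivial for degree reasons, the case $n=-1$ uses the hypothesis via $coker(g)\cong coker(f)$, and the case $n=+1$ requires the reflection argument of lemma \ref{loc mod} to factor any chain map through $B\otimes_A P_f$, which vanishes because $f$ is homological. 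You instead run long exact sequences on the triangle $A\to B\to K_f\to A[1]$: full faithfulness of the derived restriction (homologicity) gives $\Hom_{\Dcal(A)}(K_f,B[n])=0$ for all $n$, hence $\Hom_{\Dcal(A)}(K_f,K_f[n])\cong\Hom_{\Dcal(A)}(K_f,A[n+1])$, which vanishes for $n\ge 1$ because $\Ext^{n+1}_A(B,A)=0$ when $\mathrm{pd}_AB\le 1$, for $n\le -2$ for degree reasons, and for $n=-1$ reduces to $\Hom_A(coker(f),ker(f))=0$ via the adjunction fact $\Hom_A(coker(f),B)\cong\Hom_B(B\otimes_Acoker(f),B)=0$. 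Each step checks out. Your route is arguably cleaner: it replaces the reflection argument in degree $+1$ by the purely formal vanishing of $\Ext^2_A(B,A)$ and makes transparent exactly where homologicity, 1-finiteness and the Hom-vanishing hypothesis each enter; the paper's more explicit complex-level computation buys nothing extra for this step, though its concreteness is of a piece with the endomorphism-ring calculation that follows (which you carry out identically).
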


\begin{proof}
By theorem \ref{rec hom}, we have the following recollement of triangulated categories induced by the derived restriction functor $f_*$
$$\xymatrix{\mathcal{D}(B)\ar[r]^{}&\mathcal{D}(A)\ar@<1.5ex>[l]_{}\ar@<-1.5ex>[l]_{}\ar[r]^{}&
Tria(K_f).\ar@<1.5ex>_{}[l]\ar@<-1.5ex>_{}[l]}$$
Since $B$ is 1-finite, by proposition \ref{resolution}, $K_f$ is quasi-isomorphic to $P_f$, a complex $\xymatrix{P_f^{-1}\ar[r]^g&P_f^0}$ of finitely generated projective left $A$-modules, and therefore it is compact in $\Dcal(A)$. We will prove that it is exceptional. Recall that (see, for example, \cite{W}, corollary 10.4.7), for all $X$ in $\Dcal(A)$, $Hom_{\Dcal(A)}(K_f,X)\cong Hom_{\Kcal(A)}(P_f,X),$ where $\Kcal(A)$ denotes the homotopy category of complexes of left $A$-modules.
Clearly, for all $i\ge 2$ and $i\le -2$, we have $$Hom_{\Dcal(A)}(K_f,K_f[i])\cong Hom_{\mathcal{K}(A)}(P_f,K_f[i])=0.$$ 
Since, by assumption, we know that $$Hom_A(coker(g),ker(f))\cong Hom_A(coker(f),ker(f))=0,$$ we also get $$Hom_{\mathcal{D}(A)}(K_f,K_f[-1])\cong Hom_{\Kcal(A)}(P_f,K_f[-1])=0.$$ It remains to show that $$Hom_{\Dcal(A)}(K_f,K_f[1])\cong Hom_{\Kcal(A)}(P_f,K_f[1])=0.$$ Note that every element $\Phi$ in $Hom_{\Kcal(A)}(P_f,K_f[1])$ is uniquely determined by a morphism $\phi$ in $Hom_A(P_f^{-1},B)$ which, by lemma  \ref{loc mod}, factors through the $\Xcal_B$-reflection $\psi_{P_f^{-1}}$. This shows that $\Phi$ factors through $B\otimes_A P_f$, which is zero in $\Dcal(A)$ (see argument in the proof of theorem \ref{Main}). Since $B\otimes_A P_f$ is a two term complex, it is also zero in $\Kcal(A)$. Thus, we have $\Phi=0$ and $$Hom_{\Dcal(A)}(K_f,K_f[i])=0,\ \forall i\neq 0.$$

We conclude that $K_f$ is a compact exceptional object in $\mathcal{D}(A)$. Therefore, by a result of Keller (\cite{Ke1}, theorem 8.5), we get a recollement of derived module categories
$$\xymatrix{\mathcal{D}(B)\ar[r]^{}&\mathcal{D}(A)\ar@<1.5ex>[l]_{}\ar@<-1.5ex>[l]_{}\ar[r]^{}&
D(End_{\mathcal{D}(A)}(K_f)).\ar@<1.5ex>_{}[l]\ar@<-1.5ex>_{}[l]}$$

Suppose now that $f$ is finite and $P_f=K_f$. We will describe $End_{\mathcal{D}(A)}(K_f)\cong End_{\mathcal{K}(A)}(K_f)$. Note that, for any element $a$ in $A$, there is a unique morphism in $End_{\mathcal{K}(A)}(K_f)$ defined by $k_A(1_A)=a$ and $k_B(1_B)=f(a)$ as in the following commutative diagram
$$\xymatrix{...\ar[r] & 0\ar[r]\ar[d] & A\ar[r]^f\ar[d]_{k_A} & B\ar[r]\ar[d]^{k_B} & 0\ar[r]\ar[d] & ...\\...\ar[r] & 0\ar[r] & A\ar[r]^f & B\ar[r] & 0\ar[r] & ...}$$ 
It is easy to see that we get a surjective ring homomorphism $\Omega: A\rightarrow End_{\mathcal{K}(A)}(K_f)$, whose kernel can be described by homotopy. It turns out that an element $a$ in $A$ lies in the kernel of $\Omega$ if and only if it exists $h$ in $Hom_A(B,A)$ with $h(1_B)=a$ making the diagram
$$\xymatrix{...\ar[r] & 0\ar[r]\ar[d] & A\ar[r]^f\ar[d] & B\ar[r]\ar[d]\ar[dl]_h & 0\ar[r]\ar[d] & ...\\...\ar[r] & 0\ar[r] & A\ar[r]^f & B\ar[r] & 0\ar[r] & ...}$$ commute. It remains to show that $ker(\Omega)=\tau_B(A)$. It is clear that $ker(\Omega)\subseteq \tau_B(A)$. Conversely, let $a$ be an element in $\tau_B(A)$. Let $h$ be a map in $Hom_A(B,A)$ such that $a=h(b)$ for some $b\in B$. We define a morphism $\tilde{h}\in Hom_A(B,B)\cong End_B(B)$ by mapping $1_B$ to $b$. Therefore, $h\circ\tilde{h}$ lies in $Hom_A(B,A)$ and it satisfies $h\circ\tilde{h}(1_B)=a$. Hence, $a$ lies in $ker(\Omega)$, finishing the proof.
\end{proof}

Following \cite{Wm}, we say that a ring $A$ is derived simple if it does not admit a non-trivial recollement of derived module categories.

\begin{corollary}
If $A$ admits a non-trivial homological 1-finite ring epimorphism  $f: A\rightarrow B$ which is either injective or surjective, then $A$ is not derived simple.
\end{corollary}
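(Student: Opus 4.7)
The plan is to deduce the corollary as a direct application of theorem \ref{Main 2}. There are two things to check: that the extra Hom-vanishing hypothesis of that theorem holds automatically under either of the two assumptions on $f$, and that the recollement it produces is genuinely non-trivial, i.e.\ that neither outer term is zero.

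For the hypothesis, observe that $Hom_A(coker(f), ker(f)) = 0$ is automatic in both cases: injectivity of $f$ gives $ker(f) = 0$, and surjectivity of $f$ gives $coker(f) = 0$, so in either situation one of the two arguments of the Hom-functor vanishes and the group is trivially zero. Combining this with the standing assumption that $f$ is a homological 1-finite ring epimorphism, theorem \ref{Main 2} immediately supplies a recollement of the form
\[\xymatrix{\Dcal(B)\ar[r] & \Dcal(A)\ar@<1.5ex>[l]\ar@<-1.5ex>[l]\ar[r] & \Dcal(End_{\Dcal(A)}(K_f)).\ar@<1.5ex>[l]\ar@<-1.5ex>[l]}\]

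For non-triviality, the left-hand term $\Dcal(B)$ is non-zero because $B \neq 0$: either $f$ is an injection from the non-zero ring $A$, so $B$ contains $f(A) \neq 0$, or $f$ is a unital surjection onto a non-zero ring $B$. The right-hand term $\Dcal(End_{\Dcal(A)}(K_f))$ is non-zero because $f$ is not an isomorphism, so $K_f$ is not quasi-isomorphic to $0$, and hence its endomorphism ring is non-zero. The only point requiring any care is thus the translation of "non-trivial" for $f$ into "non-trivial" for the recollement, and beyond that the corollary is essentially a repackaging of theorem \ref{Main 2} together with the observation that injectivity or surjectivity of $f$ automatically kills its Hom-obstruction.
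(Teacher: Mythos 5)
Your proof is correct and is exactly the argument the paper intends (the corollary is stated without proof as an immediate consequence of Theorem \ref{Main 2}): injectivity or surjectivity forces $\ker(f)=0$ or $\mathrm{coker}(f)=0$ respectively, so the Hom-vanishing hypothesis holds trivially, and non-triviality of $f$ (excluding both $B=0$ and $f$ an isomorphism) makes both outer terms of the resulting recollement non-zero. Your explicit check of non-triviality is a worthwhile addition that the paper leaves implicit.
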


Let $f:A\ra B$ be a finite ring epimorphism. It is well-known that, as the trace of a projective $A$-module in $A$, $\tau_B(A)$ is a two-sided idempotent ideal. In particular, if $A$ is a finite dimensional $\mathbb{K}$-algebra, then $\tau_B(A)$ is generated by an idempotent $e$, i.e., $\tau_B(A)=AeA$. More precisely, we have the following easy lemma.

\begin{lemma}\label{idempotent}
If $A$ is a finite dimensional $\mathbb{K}$-algebra, $B$ a finitely generated projective left $A$-module and $I:=\{e_1,...,e_n\}$ a complete set of primitive orthogonal idempotents in $A$, then we have $$\tau_B(A)=\underset{Ae_i|B}{\sum\limits_{e_i\in I}}Ae_iA.$$
\end{lemma}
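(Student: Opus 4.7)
The plan is to reduce the computation of $\tau_B(A)$ to that of a single indecomposable projective summand by invoking the Krull--Schmidt theorem, and then to use the classical description of $\Hom_A(Ae_i, A)$. Since $A$ is a finite-dimensional $\Kbb$-algebra, every finitely generated projective left $A$-module decomposes uniquely (up to isomorphism and reordering) as a finite direct sum of indecomposable projectives of the form $Ae_i$ with $e_i\in I$. So I would write $B \cong \bigoplus_{j=1}^m Ae_{i_j}$, with possible repetitions among the $e_{i_j}$.

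The trace construction $\tau_M(N) = \sum_{\phi\in\Hom_A(M,N)}\Img(\phi)$ is manifestly additive in direct-sum decompositions of the first argument, because any $\phi\colon M_1\oplus M_2\to N$ is determined by its restrictions and $\Img(\phi) = \Img(\phi|_{M_1}) + \Img(\phi|_{M_2})$. This gives
$$\tau_B(A) \;=\; \sum_{j=1}^m \tau_{Ae_{i_j}}(A).$$
Next I would compute $\tau_{Ae_i}(A)$ for a single primitive idempotent $e_i$. The standard identification $\Hom_A(Ae_i, A) \cong e_iA$, $\phi\mapsto \phi(e_i)$, is well-defined since $e_i = e_i\cdot e_i$ in $Ae_i$ forces $\phi(e_i) = e_i\phi(e_i) \in e_iA$, and any element $e_ia\in e_iA$ determines a unique $A$-linear map $be_i\mapsto be_ia$. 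The image of such a map is $Ae_ia$, and summing over all $a\in A$ yields $\tau_{Ae_i}(A) = Ae_iA$.

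Combining these two steps gives $\tau_B(A) = \sum_{j=1}^m Ae_{i_j}A$. Repetitions in the indexing set collapse automatically, since the two-sided ideal $Ae_iA$ depends only on the isomorphism class of the indecomposable $Ae_i$ and not on its multiplicity in the decomposition of $B$; hence the sum effectively ranges over those $e_i\in I$ with $Ae_i \mid B$, which is exactly the stated formula. I do not anticipate a genuine obstacle: the entire argument is routine once the Krull--Schmidt decomposition of $B$ is at hand, and the only mildly delicate step is the identification $\Hom_A(Ae_i,A)\cong e_iA$, which follows immediately from $e_i^2=e_i$.
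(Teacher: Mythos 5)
Your proof is correct, and the paper in fact states this lemma without proof (it is introduced as "the following easy lemma"), so your argument simply supplies the expected standard details: Krull--Schmidt, additivity of the trace in the first argument, and the identification $\Hom_A(Ae_i,A)\cong e_iA$ giving $\tau_{Ae_i}(A)=Ae_iA$. Nothing is missing; the one mildly delicate point, that repeated or isomorphic summands contribute the same ideal, is exactly handled by observing that the trace depends only on the isomorphism class.
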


Following (\cite{CPS}, \S2.1), for a finite dimensional $\mathbb{K}$-algebra $A$, we call an idempotent ideal $AeA$ of $A$ stratifying if the associated ring epimorphism $A\rightarrow A/AeA$ is homological.

\section{Examples}

In this section we will discuss recollements arising from theorem \ref{Main 2} for three classes of homological 1-finite ring epimorphisms. Examples \ref{injective} and \ref{surjective} consider the cases of injective and surjective ring epimorphisms, while proposition \ref{construction finite} and example \ref{otherwise} focus on finite ring epimorphisms which are neither injective nor surjective.

\begin{example}\label{injective}
Let $f:A\rightarrow B$ be a 1-finite, homological and injective ring epimorphism. Then, by corollary \ref{inj surj}, $B$ lies in the same epiclass of $A$ as the universal localisation $A_{\{B/A\}}$ and, by \cite{AS} (theorem 3.5), the finitely generated left $A$-module $T:=A\oplus B/A$ is tilting. Using theorem \ref{Main 2}, we get the following recollement of derived module categories
$$\xymatrix{\mathcal{D}(B)\ar[r]^{}&\mathcal{D}(A)\ar@<1.5ex>[l]_{}\ar@<-1.5ex>[l]_{}\ar[r]^{}&
\mathcal{D}(End_{A}(B/A)).\ar@<1.5ex>_{}[l]\ar@<-1.5ex>_{}[l]}$$
Note that $B/A$ is isomorphic to $K_f$ in $\Dcal(A)$. If $B/A$ is a left $A$-module of projective dimension one, this recollement is precisely the one induced by the universal localisation $A_{\{B/A\}}$ and by the tilting module $T$ in \cite{ALK1} (theorem 4.8).

Indeed, take $A$ to be the quotient of the path algebra over $\mathbb{K}$ of the quiver 
$$\xymatrix{1\ar[rr]^{\gamma}\ar[dr]_{\alpha} & & 2\\ & 3\ar[ur]_{\beta} &}$$
by the ideal generated by $\beta\alpha$. Consider the map $\gamma^*:P_2\rightarrow P_1$ in $A\mbox{-}proj$ given by multiplication with $\gamma$. 
Using remark \ref{reflection}, it is not difficult to see that $A\rightarrow A_{\{\gamma^*\}}$ is a 1-finite, homological and injective ring epimorphism and, thus, it yields the recollement
$$\xymatrix{\mathcal{D}(A_{\{\gamma^*\}})\ar[r]^{}&\mathcal{D}(A)\ar@<1.5ex>[l]_{}\ar@<-1.5ex>[l]_{}\ar[r]^{}&
\mathcal{D}(End_{\Dcal(A)}(A_{\{\gamma^*\}}/A)).\ar@<1.5ex>_{}[l]\ar@<-1.5ex>_{}[l]}$$
In fact, we can describe explicitly the outer terms of the recollement. On one hand, the universal localisation $A_{\{\gamma^*\}}$ is Morita equivalent to the $\mathbb{K}$-algebra $C$ given by the quotient of the path algebra over $\mathbb{K}$ of the quiver 
$$\xymatrix{1\ar[r]<1ex>^{\alpha}& 2\ar[l]<1ex>^{\beta}}$$
by the ideal generated by $\beta\alpha$. On the other hand, since $A_{\{\gamma^*\}}/A$ is isomorphic to $coker(\gamma^*)^{\oplus 2}$ as a left $A$-module, it follows that $End_{\Dcal(A)}(A_{\{\gamma^*\}}/A)$ is isomorphic to $\mathbb{K}\oplus \mathbb{K}$. Moreover, it is easy to check, on a case by case analysis, that this recollement is not induced by a stratifying ideal of $A$.
\end{example}

\begin{example}\label{surjective}
Let $f:A\rightarrow B$ be a 1-finite, homological and surjective ring epimorphism. Then, by corollary \ref{inj surj}, $ker(f)$ is a finitely generated projective left $A$-module and $B\cong A/ker(f)$ lies in the same epiclass of $A$ as the universal localisation $A_{\{ker(f)\}}$. Using theorem \ref{Main 2}, we get the following recollement of derived module categories
$$\xymatrix{\mathcal{D}(A/ker(f))\ar[r]^{}&\mathcal{D}(A)\ar@<1.5ex>[l]_{}\ar@<-1.5ex>[l]_{}\ar[r]^{}&
\mathcal{D}(End_{A}(ker(f))).\ar@<1.5ex>_{}[l]\ar@<-1.5ex>_{}[l]}$$
Note that we have $K_f\cong ker(f)[1]$ in $\Dcal(A)$.

Moreover, if $A$ is a finite dimensional $\mathbb{K}$-algebra then, again by corollary \ref{inj surj}, $B$ and $A/AeA$ lie in the same epiclass of $A$, for some idempotent $e$ in $A$. The above recollement is then the one induced by the stratifying ideal $AeA$ of $A$, namely
$$\xymatrix{\mathcal{D}(A/AeA)\ar[r]^{}&\mathcal{D}(A)\ar@<1.5ex>[l]_{}\ar@<-1.5ex>[l]_{}\ar[r]^{}&
\mathcal{D}(eAe).\ar@<1.5ex>_{}[l]\ar@<-1.5ex>_{}[l]}$$
\end{example}

We now give sufficient conditions for universal localisations to yield finite ring epimorphisms. In what follows, an element $w\not= 0$ of an admissible ideal $I$ of the path algebra of a quiver is called  a relation if it is a linear combination of paths with the same source and target such that for any non-trivial factorisation $w=uv$ neither $u$ nor $v$ lie in $I$. Note that $I$ is generated by its relations.

\begin{proposition}\label{construction finite}
Let $A=\Kbb Q/I$ be a finite dimensional $\mathbb{K}$-algebra given by a connected quiver $Q$ and an admissible ideal $I$ in $\Kbb Q$. Assume that there are vertices $i$ and $j$ and an arrow $\alpha:i\rightarrow j$ in $Q$ such that:
\begin{enumerate}
\item $\alpha$ is the unique arrow in $Q$ starting at vertex $i$;
\item $\alpha$ is the unique arrow in $Q$ ending at vertex $j$;
\item there is no relation in $I$ ending at vertex $j$.
\end{enumerate}
Then the ring epimorphism $f:A\rightarrow A_{\{\alpha^*\}}$ is finite, where $\alpha^*:P_j\rightarrow P_i$ is the map in $A\mbox{-}proj$ given by multiplication with $\alpha$. Moreover, $f_*$ induces a recollement of derived module categories 
$$\xymatrix{\mathcal{D}(A_{\{\alpha^*\}})\ar[r]^{}&\mathcal{D}(A)\ar@<1.5ex>[l]_{}\ar@<-1.5ex>[l]_{}\ar[r]^{}&
\mathcal{D}(\mathbb{K}).\ar@<1.5ex>_{}[l]\ar@<-1.5ex>_{}[l]}$$
\end{proposition}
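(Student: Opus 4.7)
The plan is to verify the hypotheses of Theorem \ref{Main 2} for $f: A \to B := A_{\{\alpha^*\}}$ and then compute $\End_{\Dcal(A)}(K_f)$. The starting point is to identify $B$ as a left $A$-module through the $\Xcal_B$-reflections $\psi_{P_k}: P_k \to Be_k$ of the indecomposable projectives $P_k = Ae_k$ (Remark \ref{reflection}). Recall that $\Xcal_B$ consists of modules $M$ for which left multiplication by $\alpha$ is an isomorphism $e_iM \to e_jM$.

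The key technical step is to show that for every $k \neq j$, left multiplication by $\alpha$ restricts to an isomorphism $e_iAe_k \to e_jAe_k$. Surjectivity is immediate from condition (2): every path from $k$ to $j$ has positive length (as $k \neq j$), and hence ends with the unique arrow $\alpha$. For injectivity, given $\alpha \tilde m \in I$ with $\tilde m \in e_i \Kbb Q e_k$, expand $\alpha\tilde m = \sum_l c_l \lambda_l r_l \mu_l$ with $r_l$ relations, and restrict to terms landing in $e_j \Kbb Q e_k$. Condition (3) forbids $\lambda_l = e_j$ (else $r_l$ would itself be a relation ending at $j$); condition (2) then forces $\lambda_l = \alpha \lambda'_l$. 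Cancelling $\alpha$ on the left, which is injective in the free path algebra $\Kbb Q$, yields $\tilde m = \sum_l c_l \lambda'_l r_l \mu_l \in I$. Thus $P_k \in \Xcal_B$ for every $k \neq j$, and a universal-property check (inverting $\alpha$-multiplication on $M \in \Xcal_B$ produces the unique factorisation of any $g: P_j \to M$ through $\alpha^*$) shows that $\alpha^*: P_j \to P_i$ is the reflection $\psi_{P_j}$. Hence $B \cong \bigoplus_{k \neq j} P_k \oplus P_i$ as a left $A$-module, so $f$ is a finite ring epimorphism by Corollary \ref{cor finite}, and in particular $1$-finite and homological.

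From this decomposition, $coker(f) \cong P_i/A\alpha = P_i/\mathrm{rad}(P_i) = S_i$ (condition (1) identifies $A\alpha$ with the radical), while $ker(f) = ker \, \alpha^* \subseteq P_j$. The Hom-vanishing $\Hom_A(coker \, f, ker \, f) = \Hom_A(S_i, ker \, \alpha^*)$ reduces to $\Hom_A(S_i, P_j) = 0$: a nonzero such map would yield $m \in e_iAe_j$ with $\alpha m = 0$, contradicting the injectivity above. Theorem \ref{Main 2} then produces the recollement with right-hand term $\Dcal(A/\tau_B(A))$.

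Finally, by Lemma \ref{idempotent} and the decomposition of $B$, $\tau_B(A) = \sum_{k \neq j} Ae_kA = A(1-e_j)A$. The quotient $A/A(1-e_j)A$ is spanned by residues of paths involving only vertex $j$. Since $i \neq j$ (the case $i=j$ being incompatible with admissibility of $I$ under condition (3)), condition (2) precludes any loop at $j$, so $e_j$ is the only such path, giving $A/\tau_B(A) \cong \Kbb$. The main obstacle is the injectivity argument, where condition (3) is essential.
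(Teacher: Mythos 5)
Your proof is correct and follows essentially the same route as the paper: computing the $\Xcal_B$-reflections of the indecomposable projectives to see that $B\cong\bigoplus_{k\neq j}P_k\oplus P_i$ is finitely generated projective, identifying $\mathrm{coker}(f)=S_i$ and $\ker(f)=\ker(\alpha^*)$, deriving the Hom-vanishing from the same contradiction with condition (3), and computing $A/\tau_B(A)\cong\Kbb$ via Lemma \ref{idempotent}. The only differences are that you spell out the combinatorial injectivity of $\alpha\cdot\colon e_iAe_k\to e_jAe_k$ (and the fact that $i\neq j$), which the paper leaves as ``easy to check'', and that your citation of Corollary \ref{cor finite} for the finiteness of $f$ is unnecessary --- once $B$ is shown to be finitely generated projective, this is just the definition.
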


\begin{proof}
By our combinatorial assumptions and lemma \ref{loc mod}, it is easy to check the following isomorphism of left $A$-modules for each indecomposable projective $A$-module $P_k$ 
$$A_{\{\alpha^*\}}\otimes_A P_k\cong\left\{\begin{array}{cl} P_k, & \mbox{}k\not= j\\ P_i, & \mbox{} k=j.\end{array}\right.$$
Using remark \ref{reflection}, we conclude that $f:A\rightarrow A_{\{\alpha^*\}}$ is a finite ring epimorphism and, when regarded as an $A$-module homomorphism, 
$$f:\bigoplus\limits_k P_k\longrightarrow \bigoplus\limits_k (A_{\{\alpha^*\}}\otimes_A P_k)$$ is given by right multiplication with the square matrix
$${\Tiny{\begin{pmatrix}
1 & & & & & & \\
& ... & & & & & \\
& & 1 & & & & \\
& & & \alpha & & &\\
& & & & 1 & & \\
& & & & & ... & \\
& & & & & & 1
\end{pmatrix}}},$$
where $\alpha$ lies in position $(j,j)$.

We now show that $Hom_A(coker(f),ker(f))=0$. Clearly, we have
$$coker(f)=coker(\alpha^*)=S_i,$$
$$ker(f)=ker(\alpha^*).$$
Note that $f$ is injective if and only if there is no relation in $I$ starting at vertex $i$. Now assume that $Hom_A(coker(f), ker(f))=Hom_A(S_i,ker(\alpha^*))\not= 0$. Consequently, there is a non-trivial element $u$ in $e_iAe_j$ such that $\alpha u$ is zero in $A$, a contradiction to condition (3) in the assumptions. Therefore, by theorem \ref{Main 2}, we get the following recollement of derived module categories
$$\xymatrix{\mathcal{D}(A_{\{\alpha^*\}})\ar[r]^{}&\mathcal{D}(A)\ar@<1.5ex>[l]_{}\ar@<-1.5ex>[l]_{}\ar[r]^{}&
\mathcal{D}(A/\tau_{A_{\{\alpha^*\}}}(A)),\ar@<1.5ex>_{}[l]\ar@<-1.5ex>_{}[l]}$$
where, by lemma \ref{idempotent}, $\tau_{A_{\{\alpha^*\}}}(A)$ is isomorphic to $AeA$ for $e:=\underset{k\not= j}{\sum}e_k.$ Hence, we have 
$$A/\tau_{A_{\{\alpha^*\}}}(A)\cong A/AeA\cong \mathbb{K}.$$
\end{proof} 

\begin{remark}
Note that similar conditions to the ones above are considered in \cite{CK} (example 3.6.2), in the setting of expansions of abelian categories. Indeed, they prove that the inclusion functor $\Xcal_{A_{\{\alpha^*\}}}\hookrightarrow A\mbox{-}mod$ is a right expansion. It is also a left expansion if the map $\alpha^*$ is injective.
\end{remark}

We provide an application for the proposition.

\begin{example}\label{otherwise}
Let $n\in\mathbb{N}_{>1}$ and $A$ be the quotient of the path algebra over $\Kbb$ of the quiver $Q$ below
{\tiny{$$\xymatrix{ & 1\ar[rr] & & 2\ar[dr] & \\ n\ar[ur] & & & & 3\ar[dl]\\ & \dots\ar[ul] & & 4\ar[ll] &}$$}}by an admissible ideal $I$ which is not a power of the ideal generated by the arrows of $Q$. Consequently, there are vertices $i$ and $j$ and an arrow $\alpha:i\ra j$ in $Q$ such that there is no relation in $I$ ending at vertex $j$. We can now apply proposition \ref{construction finite}, yielding the recollement  
$$\xymatrix{\mathcal{D}(A_{\{\alpha^*\}})\ar[r]^{}&\mathcal{D}(A)\ar@<1.5ex>[l]_{}\ar@<-1.5ex>[l]_{}\ar[r]^{}&
\mathcal{D}(\mathbb{K}).\ar@<1.5ex>_{}[l]\ar@<-1.5ex>_{}[l]}$$
In particular, $A$ is not derived simple.
This conclusion can also be obtained by observing that $A$ admits a stratifying ideal $AeA$, for some idempotent $e$ in $A$. Again by assumption, there are vertices $r$ and $s$ and an arrow $\beta:r\rightarrow s$ in $Q$ such that there is no relation in $I$ starting at vertex $r$. Hence, by multiplication with $\beta$ we get an injective morphism $\beta^*:P_s\rightarrow P_r$ and $coker(\beta^*)=S_r$ is of projective dimension 1. Now consider the universal localisation of $A$ at $\Ucal :=\{{\underset{k\not= r}{\bigoplus} P_k}\}$, where $A_{\{\Ucal\}}$ lies in the same epiclass of $A$ as $A/AeA$ for $e:=\underset{k\not= r}{\sum}e_k$. Since $\Xcal_{A_{\{\Ucal\}}}$ is equivalent to $add\{S_r\}$, the ring epimorphism $A\rightarrow A_{\{\Ucal\}}$ is 1-finite and, hence, homological. We conclude that the idempotent ideal $AeA$ is stratifying and it yields the following recollement of derived module categories
$$\xymatrix{\mathcal{D}(\mathbb{K})\ar[r]^{}&\mathcal{D}(A)\ar@<1.5ex>[l]_{}\ar@<-1.5ex>[l]_{}\ar[r]^{}&
\mathcal{D}(eAe).\ar@<1.5ex>_{}[l]\ar@<-1.5ex>_{}[l]}$$

Note that in many cases the algebra $eAe$ in the above recollement can be chosen to be Morita equivalent to $A_{\{\alpha^*\}}$.
For example, let $B$ be the quotient of the path algebra over $\Kbb$ of the quiver 
$$\xymatrix{1\ar[r]<1ex>^{\alpha}& 2\ar[l]<1ex>^{\beta}}$$
by the ideal generated by $\beta\alpha\beta$.
On one hand, the finite ring epimorphism $A\rightarrow A_{\{\alpha^*\}}$, where $A_{\{\alpha^*\}}$ is Morita equivalent to $\mathbb{K}[x]/x^2$, yields the recollement
$$\xymatrix{\mathcal{D}(\mathbb{K}[x]/x^2)\ar[r]^{}&\mathcal{D}(A)\ar@<1.5ex>[l]_{}\ar@<-1.5ex>[l]_{}\ar[r]^{}&
\mathcal{D}(\mathbb{K}).\ar@<1.5ex>_{}[l]\ar@<-1.5ex>_{}[l]}$$
On the other hand, the stratifying ideal $Ae_2A$ induces the recollement
$$\xymatrix{\mathcal{D}(\mathbb{K})\ar[r]^{}&\mathcal{D}(A)\ar@<1.5ex>[l]_{}\ar@<-1.5ex>[l]_{}\ar[r]^{}&
\mathcal{D}(e_2Ae_2),\ar@<1.5ex>_{}[l]\ar@<-1.5ex>_{}[l]}$$
where $e_2Ae_2$ and $\mathbb{K}[x]/x^2$ are isomorphic as rings.
\end{example}

\end{document}